\DeclareMathOperator{\Hom}{Hom}
\DeclareMathOperator{\rk}{rk}
\DeclareMathOperator{\relint}{relint}
\DeclareMathOperator{\str}{star}
\DeclareMathOperator{\Cube}{Cube}
\DeclareMathOperator{\Aut}{Aut}
\newcounter{stmcounter}[section]
\numberwithin{equation}{section}
\newcounter{thmMaincounter}
\theoremstyle{plain}
\newtheorem{cor}[stmcounter]{Corollary}
\newtheorem{thm}[stmcounter]{Theorem}
\newtheorem{thmM}[thmMaincounter]{Theorem}
\newtheorem{prop}[stmcounter]{Proposition}
\newtheorem{lem}[stmcounter]{Lemma}
\theoremstyle{definition}
\newtheorem{defin}[stmcounter]{Definition}
\theoremstyle{remark}
\newtheorem{ex}[stmcounter]{Example}
\newtheorem{rem}[stmcounter]{Remark}
\newtheorem{con}[stmcounter]{Construction}
\newcommand{\la}{\langle}
\newcommand{\ra}{\rangle}
\newcommand{\lb}{\lbrace}
\newcommand{\rb}{\rbrace}
\newcommand{\floor}[1]{\lfloor #1 \rfloor}
\def\Co{\mathbb C}
\def\Ro{\mathbb R}
\def\Qo{\mathbb Q}
\def\Zo{\mathbb Z}
\def\No{\mathbb N}
\def\Io{\mathbb I}
\begin{document}
\title[Independent GKM-graphs]{On independent GKM-graphs without nontrivial extensions}

\author{Grigory Solomadin}
\address{Laboratory of algebraic topology and its applications, Faculty of computer science, National Research University Higher School of Economics, Russian Federation}
\email{grigory.solomadin@gmail.com}

\thanks{The research was partially supported by the ``RUDN University program 5--100'' program. The article was prepared within the framework of the HSE University Basic Research Program}

\keywords{Torus action, GKM-theory}

% \subjclass[2020]{Primary: 57S12, 55N91, 06A06 Secondary: 55U10}

\begin{abstract}
In this paper an example of a $k$-independent $(n,k)$-type GKM-graph without nontrivial extensions is constructed for any $n\geq k\geq 3$. It is shown that this example cannot be realized by a GKM-manifold for any $n=k=3$ or $n\geq k\geq 4$.
\end{abstract}

\maketitle

\section{Introduction}\label{secIntro}

GKM-theory \cite{gkm-98}, \cite{gu-za-01} provides with a useful method for computation of the equivariant cohomology ring for a wide class of manifolds equipped with a torus action (having only isolated fixed points) called GKM-manifolds. The main tool of this theory is a graph that is naturally asssociated with the orbit space of the equivariant $1$-skeleton for a GKM-manifold. The labels on the edges of this graph are given by the tangent weights of the fixed points associated with the torus action. By abstracting from the torus action one obtains the definition of a GKM-graph axiomatized in \cite{gu-za-01}. Apart from the equivariant cohomology ring many other important geometric objects and topological invariants related to a GKM-manifold can be studied in terms of the respective GKM-graph, such as Betti numbers \cite{gu-za-01}, invariant almost complex structures \cite{gu-ho-za-06}, invariant symplectic and K\"ahler structures \cite{go-ko-zo-20}, etc.

A GKM-manifold $M^{2n}$ is called a GKM-manifold in $j$-general position if any $j$ tangent weights at any fixed point of this action on $M^{2n}$ are linearly independent. A $j$-independent GKM-graph is defined similarly. Properties of a GKM-action of a $T^k$-action on $M^{2n}$ in $j$-general position were studied in several papers (in a wider context of equivariantly formal smooth actions with isolated fixed points). The number $n-k$ is called the complexity of the $T^k$-action on $M^{2n}$. In \cite{ma-pa-06} it was shown that the equivariant cohomology ring of a torus manifold with vanishing odd cohomology groups (that is, a $2n$-dimensional GKM-manifold of complexity $0$ which is automatically $n$-independent) is isomorphic to the Stanley-Reisner ring of the face poset of the GKM-graph. Also see \cite{ma-ma-pa-07} for treatment of equivariant topology for torus manifolds from GKM-perspective. Furthermore, any GKM-manifold $M^{2n}$ of complexity $1$ with a GKM-action in general position (that is, in $(n-1)$-general position) has a description \cite{ay-ma-so-22} of the equivariant cohomology ring in terms of face rings similar to \cite{ma-pa-06}. The poset of faces $S_{M}$ in the orbit space $M/T$ was associated to a GKM-manifold $M$ with the $T$-action in \cite{ay-ma-so-22}. We remark that in the case of complexity $0$ this object was studied in earlier papers \cite{ma-pa-06} (for a locally standard action, so that $M/T$ is a manifold with corners) and in \cite{ay-ma-19} (for a general case, so that $M/T$ is a homological cell complex). It was shown in \cite{ay-ma-so-22} that for a GKM-manifold in $j$-general position the subposets $(S_{M})_{<s}$ and  $(S_{M})_{r}$ are $\min\lb \dim s-1,j+1\rb$- and $\min\lb r-1,j+1\rb$-acyclic, respectively, for any $r>0$ and $s\in S_{M}$. This implies by \cite{ma-pa-06, ay-ma-19} that the corresponding orbit space $M/T$ of $M$ is $(j+1)$-acyclic. This result was proved in \cite{ma-pa-06} for $j=n$ ($n$-independent case) and in \cite{ay-ma-19} for the case of arbitrary $j$. On the other hand, by dropping the condition of general position, an arbitrary $T^{n-1}$-action on a smooth manifold $M^{2n}$ (of complexity $1$) with isolated fixed points shows much more complicated behaviour of the orbit space homology \cite{ay-ch-19}. Therefore, GKM-actions in $j$-general position form a particularly nice class of GKM-actions exhibiting many useful properties (depending on the value of $j$).

A natural question (called an extension problem in \cite{ku-19}) is how to determine whether a given GKM-action extends to an effective GKM-action of a torus of greater dimension (so that the complexity of the new action is lower) on the same manifold, or not. This question leads to a search of a $k$-independent $(n,k)$-type GKM-action that is nonextendible for arbitrary $n\geq k\geq 2$. Homogeneous GKM-manifolds \cite{gu-ho-za-06} supply with many interesting examples of GKM-graphs, including those of positive complexity. Recall that if the Euler characteristic of a homogeneous complex manifold $G/H$ is nonzero then it is a GKM-manifold by \cite{gu-ho-za-06}, where $G$ is a compact connected semisimple Lie group, $H$ is an arbitrary parabolic subgroup in $G$ and $T$ is any maximal torus of $H\subset G$. Two phenomena occur for homogeneous GKM-manifolds related to the search problem mentioned above. Firstly, S.~Kuroki proved the following gap property for $G$ of type $A_n$ with the natural GKM $T$-action on $G/H$ as above (unpublished): if the respective GKM-graph is $j$-independent for some $j\geq 4$ then it is $n$-independent, where $n$ is the rank of $G$. This leads to a conjecture that any homogeneous GKM-manifold satisfies such a gap property. Secondly, the natural $T^{n-1}$-action on the Grassmanian $Gr_{k}(\Co^n)$ of $k$-planes in $\Co^n$ is a GKM-action admitting no nonisomorphic extensions for any $k$. This can be shown by modifying the proof for $k=2$ from \cite{ku-19} in a simple way. This result leads to another conjecture that most of the homogeneous GKM-manifolds are nonextendible with possibly a few exceptions determined by the condition that the respective automorphism group identity component $\Aut^0 (G/H)$ is greater than $G$. (The description of $\Aut^0 (G/H)$ is given in \cite{ak-95}, for instance.)

Therefore the extension problem seems to be a bit challenging. Instead, one can study a rather simplistic extension problem for arbitrary GKM-graphs by replacing a GKM-action of $T^{k}$ on $M^{2n}$ with an $(n,k)$-type $k$-independent GKM-graph $\Gamma$ (not necessarily coming from a GKM-action). An occurring new realization problem here is to determine whether a given GKM-graph $\Gamma$ is a GKM-graph of some GKM-manifold. We remark that some results on realization problem were obtained in dimensions $n=1,2$ in \cite{ca-ga-ka} for GKM-graphs (in terms of conditions given by the ABBV localization formula), and for GKM-orbifolds in dimension $n=2$ in \cite[Remark 4.13]{da-ku-so-18}.

In this paper, we give new examples of non-realizable and non-extendible actions of different complexity. The precise statement is given in the following main theorem of this paper (see Theorem \ref{thm:example_}). 

\begin{thmM}\label{thm:example}
For any $n\geq k\geq 3$ there exists an $(n,k)$-type GKM-graph $\Gamma$ satisfying the following properties:
\begin{enumerate}[label=(\roman*)]
\item The GKM-graph $\Gamma$ is $k$-independent;
\item The GKM-graph $\Gamma$ has no nontrivial extensions;
\item The GKM-graph $\Gamma$ is not realized by a GKM-action if $n=k=3$ or $n\geq k\geq 4$.
\end{enumerate}
\end{thmM}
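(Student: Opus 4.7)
The plan is to construct the GKM-graph $\Gamma$ explicitly in a modular fashion and then verify the three properties in turn. I would fix a base example $\Gamma_0$ in the minimal case $n=k=3$: a small $3$-valent graph equipped with an axial function $\alpha \colon E(\Gamma_0)\to \Zo^3$ satisfying the GKM-connection condition. The weights at each vertex must be linearly independent (giving $3$-independence), and their collection must be rigid in the sense that they do not admit a nontrivial lift through any projection $\Zo^{k+1}\to\Zo^{k}$. From $\Gamma_0$ I would obtain the general $(n,k)$-type example either by an analogous direct construction on a higher-valent small graph (for $k\geq 4$), or by an inductive move --- a blow-up at a vertex, or a product with a $\CP^1$-edge --- adding an independent lattice direction. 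Passing from type $(k,k)$ to type $(n,k)$ for $n>k$ is done by attaching $n-k$ extra edges at each vertex whose weights lie in the same lattice $\Zo^k$ and are generic enough to preserve $k$-independence.

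Property $(i)$ is then immediate from the explicit choice of weights. For $(ii)$, an extension to $T^{k+1}$ amounts to lifting each weight $\alpha\in\Zo^k$ to a weight $\wt{\alpha}\in\Zo^{k+1}$ projecting to $\alpha$ and compatible with the GKM-congruence condition along every edge of $\Gamma$. I would propagate the lift along a spanning tree and show that the closure conditions around independent cycles force the extra coordinates to satisfy a single global linear relation in the original weights, which by the definition of triviality yields only a trivial extension. This reduces to solving an over-determined linear system over $\Zo$ whose only solution corresponds to the given axial function.

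For the main step $(iii)$, the principal tool is the acyclicity obstruction recalled in the Introduction: if $\Gamma$ is realized by a GKM-manifold $M$ in $k$-general position, then the face subposet $(S_M)_{<s}$ is $\min\lb\dim s-1, k+1\rb$-acyclic and $(S_M)_{r}$ is $\min\lb r-1, k+1\rb$-acyclic for every face $s$ and every $r>0$, and the orbit space $M/T$ is $(k+1)$-acyclic. I would identify the face poset of the constructed $\Gamma$ combinatorially and exhibit a face (or an index $r$) at which the required acyclicity fails, producing a nontrivial reduced homology class in a degree below the bound forced by realizability. The bifurcation between $n=k=3$ and $n\geq k\geq 4$ in the statement suggests that two slightly different obstructions are needed: the case $k\geq 4$ enjoys stronger local acyclicity bounds and should yield a clean contradiction at the level of a small subposet, whereas $n=k=3$ probably requires appealing to the global $(k+1)$-acyclicity of $M/T$ directly. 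The exclusion of the case $n>k=3$ from $(iii)$ is consistent with this picture, since raising $n$ while keeping $k=3$ does not tighten any of the face-poset acyclicity conditions.

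The main obstacle is the simultaneous compatibility of the three properties in a single explicit example. The requirements of $k$-independence and realizability both favor generic weight configurations, whereas non-extendibility demands a rigid axial function, and failure of face-poset acyclicity demands specific combinatorial topology of $\Gamma$. Finding a small combinatorial object on which all three conditions hold together is the core design problem of the theorem, and verifying the acyclicity failure without accidentally restoring extendibility is where most of the effort will go.
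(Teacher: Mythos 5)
Your proposal is an outline of a strategy rather than a proof: the theorem is an existence statement, and its entire content lies in producing a concrete $\Gamma$ and verifying the three properties on it, none of which you actually do. The paper's example is quite specific: an infinite $(d+1)$-regular graph in $\Ro^d$ built from small cubes joined by diagonals, enlarged by $r$ families of long ``chord'' edges $E_j(v)$, with axial function values on the chords of the Vandermonde form $\sum_i \varepsilon_i^j(v)\,t_i^{j-1}w_i(v)$; genericity of $t_1,\dots,t_r$ (Lemma \ref{lm:findvals}) gives $(d+1)$-independence, and the finite graph is the quotient $\Gamma^{2^{r+1}}(d,r)$ by a translation lattice. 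Your suggestion that one can pass from $(k,k)$ to $(n,k)$ by ``attaching $n-k$ extra edges whose weights are generic enough'' runs directly against non-extendibility: a generic axial function on extra edges is exactly what one would expect to \emph{admit} lifts. The mechanism that kills extensions in the paper is combinatorial, not generic: if $e$ is a transversal edge to a face $\Xi$ with \emph{both} endpoints in $\Xi$ (a chord), then transporting $e$ along a path in $\Xi$ and using the congruence condition forces $2\alpha(e)\in\alpha\langle\Xi\rangle$ (Propositions \ref{pr:omitcond}, \ref{pr:labelinspan}); if every transversal edge to some $k$-face is a chord, any extension $\widetilde\alpha$ would have rank at most $k$ (Corollary \ref{cor:specface}). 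Your spanning-tree propagation for $(ii)$ is not wrong in spirit, but you give no reason why the resulting linear system should be over-determined with only the trivial solution; without a structural feature like chords that claim is unsupported.

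For $(iii)$ you correctly identify acyclicity of face subposets as the obstruction, but the step you leave open --- ``exhibit a face at which the required acyclicity fails'' --- is where the work is. The paper does this by an Euler characteristic computation: for a face $\Omega\cong\Gamma^{a}(d-1,0)$ realizability would make $\overline{(S_M)_{\leq F}^{op}}$ $(d-3)$-acyclic by Theorem \ref{thm:mp}, so $\tilde\chi\bigl(\Delta(\overline{(S_M)_{\leq F}^{op}})\bigr)$ could only be $0$ or have sign $(-1)^{d-2}$, whereas counting faces of the cubical quotient graph gives $\tilde\chi=(-1)^{d-1}\bigl(2a^{d-1}-(2a-1)^{d-1}\bigr)$, nonzero of the opposite sign (Lemmas \ref{lm:facenum}, \ref{lm:eucomp}). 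This requires the comparison $(S_M)_{\leq s(\Xi)}\cong(S_\Gamma)_{\leq\Xi}$ of Proposition \ref{pr:relposets}, valid only for faces of dimension bounded by the independence degree --- which is also the real reason $n>k=3$ is excluded: there the only comparable faces are too low-dimensional and their subposets are spheres with the ``allowed'' Euler characteristic, so the obstruction is invisible at this level. Your guessed bifurcation between the two cases of $(iii)$ (global acyclicity of $M/T$ for $n=k=3$ versus a small subposet for $k\geq4$) does not match the actual argument, which is uniform except for the choice of $\Omega$.
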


The necessary tools for the proof of Theorem \ref{thm:example} are twofold. Firstly, in Section \ref{sec:Obstruction} we give a sufficient criterion (Corollary \ref{cor:specface}) of non-extendibility of any GKM-graph satisfying some condition formulated in terms of chords for a face in the GKM-graph. Secondly, in Section \ref{sec:lattices} for any $(j+1)$-independent GKM-manifold $M$ with the corresponding GKM-graph $\Gamma$, we compare (Proposition \ref{pr:relposets}) some subposets in the face poset $S_{\Gamma}$ of a GKM-graph $\Gamma$ with the subposets in the face poset $S_M$ of $M$. The obtained comparison implies (by \cite{ay-ma-so-22}) partial acyclicity for some subposets in $S_{\Gamma}$. In this case $(S_{\Gamma})_{\leq \Xi}^{op}$ turns out to be a simplicial poset for any face $\Xi$ of dimension not exceeding $j$ in $\Gamma$ (Proposition \ref{pr:condgenpos}). The respective Euler characteristic of the order complex for the poset $(S_{\Gamma})_{<\Xi}^{op}$ is computed in terms of the corresponding $f$-numbers (Proposition \ref{pr:euch}). 

In Section \ref{sec:periodic} we explicitly construct an example which is suitable for the proof of Theorem \ref{thm:example}. Here is a brief outline of the construction. Firstly we introduce an infinite $(d+1)$-regular graph embedded into $\Ro^d$. We endow it with an axial function in order to obtain a torus graph $\Gamma(d,0)$. We add to it edges and define the axial function agreeing to that of $\Gamma(d,0)$ causing the increase of the complexity for a GKM-graph. The resulting $(d+1+r,d+1)$-type GKM graph $\Gamma(d,r)$ is $(d+1)$-independent. We remark that the definition of the axial function on $\Gamma(d,r)$ is obtained by applying the decision method of Tarski \cite{ta-51} to some Vandermonde matrices (Lemma \ref{lm:findvals}) and therefore it is implicit. The infinite GKM graph $\Gamma(d,r)$ is periodic (invariant) with respect to the subgroup $2^{r+1}\cdot\Zo^d\subset\Zo^d$ in the group of parallel translations in $\Ro^{d}$. The quotient $\Gamma_{d+1}^{d+1+r}:=\Gamma^{2^{r+1}}(d,r)$ of the GKM-graph $\Gamma(d,r)$ by the group $2^{r+1}\cdot\Zo^d$ is shown to be a well-defined GKM-graph without multiple edges and loops.

In the concluding Section \ref{sec:proof} of this paper we prove that $\Gamma_{d+1}^{d+1+r}$ satisfies all conditions of Theorem \ref{thm:example} (where we put $n=d+1+r$, $k=n+1$). We apply the results on acyclicity of face subposets in GKM-manifolds (in the case of complexity $0$) from \cite{ma-pa-06} in order to prove non-realizability of the constructed GKM-graph $\Gamma_{d+1}^{d+1+r}$ by studying the respective Euler characteristic (by the comparison results mentioned above). This argument relies on the explicit computation of face numbers in the torus graph $\Gamma^{a}(d,0)$ (Lemma \ref{lm:eucomp}). The nonextendibility is proved by using the method of chords mentioned above.

\section{An obstruction to a GKM-graph extension}\label{sec:Obstruction}

In this section we recall some definitions from GKM-theory (we follow the notation from \cite{gu-za-01} and \cite{ku-09}). We introduce an obstruction to have extensions for a given GKM-graph in terms of chords.

\begin{defin}\cite{gu-za-01}\label{def:gkm}
A \textit{GKM-graph} $\Gamma$ is a triple $((V,E),\nabla,\alpha)$ consisting of:
\begin{itemize}
\item a graph $(V,E)$ with the set of vertices $V\neq \varnothing$ and with the set of edges $E\subseteq V\times V\setminus \Delta(V)$, where $\Delta(V):=\lb (v,v)|\ v\in V\rb\subseteq V\times V$ and it is required that for an edge $e=(u,v)\in E$ one has $\overline{e}:=(v,u)\in E$;\\
\item a collection of bijections 
\[
\nabla=\lb \nabla_{e}\colon \str_{\Gamma} i(e) \to \str_{\Gamma} t(e)\rb,
\]
such that the identity
\[
\nabla_{e}(e)=\overline{e},
\]
holds for any $e\in E$, where $\str_{\Gamma} i(e):=\lb e\in E|\ i(e)=v\rb$
is the \textit{star} of $\Gamma$ at $i(e)$;

\item a function $\alpha\colon E\to \Zo^k$ satisfying
the \textit{rank condition}
\[
\alpha\la {\Gamma}\ra:=\Zo\la \alpha(e)|\ e\in \str_{\Gamma}(v)\ra = \Zo^k,
\]
the \textit{opposite sign condition}
\[
\alpha(\overline{e})=-\alpha(e),
\]
and the \textit{congruence condition}
\[
\alpha(\nabla_{e} (e'))=\alpha(e')+c_{e}(e')\alpha(e),
\]
for some integer $c_{e}(e')\in\Zo$ and for any $e,e'\in E$ with a common source, and $v\in V$.
\end{itemize}
The collection $\nabla$ is called a \textit{connection} of $\Gamma$ and the function $\alpha$ is called an \textit{axial function} of $\Gamma$.
\end{defin}

In this section we fix a GKM-graph $\Gamma$ with the corresponding connected $n$-valent graph $(V,E)=(V_{\Gamma},E_{\Gamma})$, axial function $\alpha\colon E\to\Zo^k$ on $\Gamma$ and a connection $\nabla$ on it. 

\begin{defin}\cite{gu-za-01}
A connected $r$-regular subgraph $\Xi$ of $\Gamma$ is called an \textit{$r$-face} (or a \textit{face}) of $\Gamma$, if $\nabla_{e}(e')\in E_{\Xi}$ holds for any $e,e'\in E_{\Xi}$ such that $i(e)=i(e')$ (in \cite{gu-za-01} it is called a \textit{totally geodesic subgraph}). Any edge $e\in \str_{\Gamma} v\setminus \str_{\Xi} v$ is called a \textit{transversal edge} to a face $\Xi$ in $\Gamma$, where $v\in V_{\Xi}$. Let
\[
\alpha\la \Xi\ra=\alpha_{v}\la \Xi\ra:=\Zo\la \alpha(e)|\ e\in \str_{\Xi}(v)\ra\subseteq \Zo^k,
\]
be a \textit{span} of a face $\Xi$ in $\Gamma$, where 
$\Zo\la \alpha(e)|\ e\in \str_{\Xi}(v)\ra$ denotes the $\Zo$-linear span of vectors $\alpha(e)$, where $e$ runs over $\str_{\Xi}(v)$. The GKM-graph $\Gamma$ is called a GKM-graph of \textit{$(n,k)$-type}, if $\Gamma$ is $n$-valent and the \textit{rank} $\rk \alpha:=\rk\alpha\la\Gamma\ra$ of $\alpha$ is equal to $k$.
\end{defin}

\begin{rem}
A face $\Xi$ of a GKM-graph $\Gamma$ becomes a well-defined GKM-graph by taking restrictions of the connection and of the axial function from $\Gamma$ to $\Xi$. The span of the face $\Xi$ is well defined because of the identity $\alpha_{p}\la \Xi\ra= \alpha_{q}\la \Xi\ra$ for every $p,q\in V_{\Xi}$ which immediately follows from the congruence condition.
\end{rem}

\begin{defin}\label{defin:chord}
A GKM-graph $\Gamma$ is called {\it $j$-complete} if for any $v\in V$, any integer $i\leq j$ and any distinct edges $e_1,\dots,e_i\in \str_{\Gamma}(v)$ there exists an $i$-face $\Xi$ of $\Gamma$ such that $\str_{\Xi}(v)=\{e_1,\dots,e_i\}$ holds. An $n$-regular $n$-complete GKM-graph is called a \emph{complete} GKM-graph. A GKM-graph $\Gamma$ is called \textit{$j$-independent} if for any $v\in V$ and any distinct edges $e_1,\dots,e_j\in\str_{\Gamma} v$ the values $\alpha(e_1),\dots,\alpha(e_j)$ of the axial function $\alpha$ on $\Gamma$ are linearly independent in $\Zo^k$.
\end{defin}

\begin{defin}
Let $\Xi$ be a face of $\Gamma$. We call a transversal edge $e\in E_{\Gamma}$ to $\Xi$ \textit{a chord} of the face $\Xi$, if $i(e),t(e)\in V_{\Xi}$. If the face $\Xi$ in $\Gamma$ admits no chords then we call $\Xi$ \textit{a chordless face} of $\Gamma$.
\end{defin}

\begin{ex}\label{ex:t2fl3}
The standard $T^2$-action on the flag manifold $\mathcal{F}l(3)$ is a GKM-action with the GKM-graph $\Gamma$ of $(3,2)$-type and with $K_{3,3}$ as the underlying graph \cite[p.40]{gu-ho-za-06}. The GKM-graph $\Gamma$ satisfies the opposite sign condition. It has five $2$-faces (three $4$-cycles and two $6$-cycles). For any such $6$-cycle $2$-face $\Xi$ the remaining $3$ transversal edges in $\Gamma$ are chords of $\Xi$. 
\end{ex}

\begin{prop}\label{pr:omitcond}
Let $\Xi$ be a $j$-face of a $(j+1)$-complete GKM-graph $\Gamma$. Then for any chord $e\in E_{\Gamma}$ of $\Xi$ and any edge path $\gamma=(e_1,\dots,e_r)$ in $\Xi$ such that $i(\gamma)=i(e)$, $t(\gamma)=t(e)$, one has $\Pi_{\gamma} e=\overline{e}$, where by definition (see \cite{ta-04})
\[
\Pi_{\gamma}:\ \str_{\Gamma}(i(\gamma))\to \str_{\Gamma}(t(\gamma)),\quad \Pi_{\gamma}(e):=\nabla_{e_r}\circ\dots\circ\nabla_{e_1} (e).
\]
\end{prop}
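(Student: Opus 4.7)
The plan is to use $(j+1)$-completeness to engulf both $\Xi$ and the chord $e$ in a single $(j+1)$-face $\Xi'$, and then exploit the fact that both $\Xi$ and $\Xi'$ are totally geodesic with respect to the same connection $\nabla$. Concretely, since $\Xi$ is a $j$-face, $\str_{\Xi}(i(e))$ consists of $j$ edges, and since $e$ is transversal to $\Xi$ at $i(e)$, the collection $\str_{\Xi}(i(e))\cup\{e\}$ consists of $j+1$ distinct edges of $\Gamma$ at $i(e)$. The $(j+1)$-completeness hypothesis then yields a $(j+1)$-face $\Xi'$ of $\Gamma$ with $\str_{\Xi'}(i(e))=\str_{\Xi}(i(e))\cup\{e\}$.

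The next step is to verify the inclusion $\Xi\subseteq\Xi'$. For any $e_1\in\str_{\Xi}(i(e))$, the total geodesicity of $\Xi$ and $\Xi'$ forces $\nabla_{e_1}$ to restrict to bijections $\str_{\Xi}(i(e))\to\str_{\Xi}(t(e_1))$ and $\str_{\Xi'}(i(e))\to\str_{\Xi'}(t(e_1))$; the first inclusion of stars therefore propagates to $\str_{\Xi}(t(e_1))\subseteq\str_{\Xi'}(t(e_1))$. Iterating this along edges of $\Xi$ and using that $\Xi$ is connected, every vertex of $\Xi$ is a vertex of $\Xi'$ and every edge of $\Xi$ is an edge of $\Xi'$. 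In particular the edge path $\gamma$ lies in $\Xi'$.

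Now apply the connection along $\gamma$. Since $\gamma$ is a path in the face $\Xi'$, the bijection $\Pi_\gamma\colon\str_{\Gamma}(i(e))\to\str_{\Gamma}(t(e))$ restricts to a bijection $\str_{\Xi'}(i(e))\to\str_{\Xi'}(t(e))$; the same argument with $\Xi$ in place of $\Xi'$ shows $\Pi_\gamma$ also restricts to a bijection $\str_{\Xi}(i(e))\to\str_{\Xi}(t(e))$. By construction $\str_{\Xi'}(i(e))\setminus\str_{\Xi}(i(e))=\{e\}$, and the edge $\overline{e}$ belongs to $\Xi'$ (as $e\in E_{\Xi'}$) but not to $\Xi$ (as $e$ is a chord of $\Xi$), so likewise $\str_{\Xi'}(t(e))\setminus\str_{\Xi}(t(e))=\{\overline{e}\}$. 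Comparing the two restricted bijections of $\Pi_\gamma$ forces the singleton to map to the singleton, and therefore $\Pi_\gamma(e)=\overline{e}$.

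The main obstacle is the second step, namely establishing $\Xi\subseteq\Xi'$: this is essentially the statement that a face of $\Gamma$ is determined by its star at a single vertex, which is standard but needs the inductive propagation argument above. Everything else is a clean consequence of the definition of a face together with bijectivity of $\Pi_\gamma$.
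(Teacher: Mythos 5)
Your proof is correct and follows essentially the same route as the paper: engulf $\Xi$ and the chord $e$ in a $(j+1)$-face via $(j+1)$-completeness, observe that $\str_{\Xi'}(t(e))\setminus\str_{\Xi}(t(e))$ is the singleton $\{\overline{e}\}$ by regularity, and conclude since $\Pi_{\gamma}$ preserves the stars of both faces. The only difference is that you spell out the propagation argument for $\Xi\subseteq\Xi'$, which the paper's proof takes for granted.
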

\begin{proof}
By the $(j+1)$-completeness condition, there exists a $(j+1)$-face $\Phi$ of $\Gamma$ such that $\Xi$ and $e$ belong to $\Phi$. Notice that $\Pi_{e}(e)=\overline{e}\in \Phi$ holds by the definition. The definition of invariance also implies that $\Pi_{e}(e)\in \str_{\Phi}(t(\gamma))\setminus \str_{\Xi}(t(\gamma))$ holds. However, $\str_{\Phi}(t(\gamma))\setminus \str_{\Xi}(t(\gamma))$ has cardinality one, since $\Xi$ and $\Phi$ are $j$- and $(j+1)$-faces, respectively. Hence,
\[
\str_{\Phi}(t(\gamma))\setminus \str_{\Xi}(t(\gamma))=\lb \overline{e}\rb,
\]
holds. Observe that $\Pi_{\gamma}(e)\in \str_{\Phi}(t(\gamma))\setminus \str_{\Xi}(t(\gamma))$ holds by the definition of invariance, because $\gamma$ belongs to $\Xi$ by the condition. Therefore, we conclude that $\Pi_{\gamma}(e)=\overline{e}$ holds. This proves the claim of the proposition.
\end{proof}

\begin{prop}\label{pr:labelinspan}
For a chord $e\in E_{\Gamma}$ of a face $\Xi$ in $\Gamma$ suppose that there exists an edge path $\gamma$ in $\Xi$ such that $i(\gamma)=i(e)$, $t(\gamma)=t(e)$ and $\Pi_{\gamma} e=\overline{e}$ hold. Then one has $2\alpha(e)\in\alpha\la \Xi\ra$.
\end{prop}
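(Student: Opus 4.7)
The plan is to track the axial function along the iterated application of the connection, using the congruence condition at each step to express the accumulated change as an integer combination of axial values of edges lying in $\Xi$.

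First I would unwind the definition of $\Pi_\gamma$. Write $\gamma=(e_1,\dots,e_r)$ and set $f_0:=e$, $f_i:=\nabla_{e_i}(f_{i-1})$ for $i=1,\dots,r$, so that $f_r=\Pi_\gamma(e)=\overline{e}$ by the hypothesis. Each step is well-defined because, inductively, $i(f_{i-1})=t(e_{i-1})=i(e_i)$, so that $f_{i-1}\in\str_\Gamma i(e_i)$, which is the domain of $\nabla_{e_i}$.

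Next I would apply the congruence condition of Definition \ref{def:gkm} at each intermediate vertex. This gives integers $c_i:=c_{e_i}(f_{i-1})\in\Zo$ with
\[
\alpha(f_i)=\alpha(\nabla_{e_i}(f_{i-1}))=\alpha(f_{i-1})+c_i\,\alpha(e_i).
\]
Telescoping from $i=1$ to $i=r$ yields
\[
\alpha(f_r)=\alpha(f_0)+\sum_{i=1}^r c_i\,\alpha(e_i),
\]
i.e.\ $\alpha(\overline{e})=\alpha(e)+\sum_{i=1}^r c_i\,\alpha(e_i)$. Using the opposite sign condition $\alpha(\overline{e})=-\alpha(e)$, this rearranges to
\[
2\alpha(e)=-\sum_{i=1}^r c_i\,\alpha(e_i).
\]

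Finally, since $\gamma$ is an edge path in $\Xi$, every $e_i$ belongs to $E_\Xi$, so each $\alpha(e_i)$ lies in the span $\alpha\la \Xi\ra$; consequently $2\alpha(e)\in\alpha\la \Xi\ra$, as required. There is essentially no obstacle here: the only thing to be careful about is that one must track the vertices along $\gamma$ so that the congruence condition applies at each common source, which the inductive identification $i(f_{i-1})=i(e_i)$ handles cleanly.
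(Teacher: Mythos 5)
Your proof is correct and follows essentially the same route as the paper's: unwind $\Pi_\gamma$ step by step, apply the congruence condition at each intermediate vertex, telescope to get $\alpha(\overline{e})-\alpha(e)\in\alpha\la\Xi\ra$, and finish with the opposite sign condition. If anything, your explicit bookkeeping of the sources $i(f_{i-1})=i(e_i)$ and the correction terms $c_i\,\alpha(e_i)$ with $e_i\in E_\Xi$ is a slightly cleaner statement of the same telescoping identity the paper records as equation \eqref{eq:compforchord}.
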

\begin{proof}
Let $\gamma=(e_1,\dots,e_r)$. Notice that one has $\Pi_{\gamma_{i-1}} e_i\in E_{\Xi}$, because $\Xi$ is a face in $\Gamma$, where $\gamma_{i}:=(e_1,\dots,e_i)$ and $\gamma_0:=i(\gamma)$, $i=1,\dots,r$. Hence, $\alpha(\Pi_{\gamma_{i-1}} e_i)\in \alpha\la \Xi\ra$ holds for any $i=1,\dots,r$. One deduces the identity
\begin{equation}\label{eq:compforchord}
\alpha(\Pi_{\gamma}e)=\alpha(e)+\sum_{i=1}^{r} c_{e_i}(\Pi_{\gamma_{i-1}} e) \cdot \alpha(\Pi_{\gamma_{i-1}} e_i),
\end{equation}
from the congruence condition. We conclude that
\begin{equation}\label{eq:diffspan}
\alpha(\Pi_{\gamma}e)-\alpha(e)\in \alpha\la \Xi\ra,
\end{equation}
holds. Notice that the identities $\alpha(\overline{e})=-\alpha(e)$, $\alpha(\overline{e})=\alpha(\Pi_{\gamma}e)$ hold. Together with the inclusion \eqref{eq:diffspan} this implies the claim of the proposition.
\end{proof}

\begin{prop}\label{pr:invsubg}
Suppose that the underlying graph of $\Gamma$ has finitely many vertices. Then, if $\Gamma$ is $(j+1)$-independent, then $\Gamma$ is $j$-complete, where $j\in\Zo$.
\end{prop}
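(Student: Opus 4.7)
The plan is to construct the required $i$-face $\Xi$ through $v$ with $\str_\Xi(v) = \{e_1, \ldots, e_i\}$ directly from the axial function, using $(j+1)$-independence as the main combinatorial input. View $\Zo^k \subseteq \Qo^k$ and let $U \subseteq \Qo^k$ be the $\Qo$-linear span of $\alpha(e_1), \ldots, \alpha(e_i)$. Since $i \leq j$ and $\Gamma$ is $(j+1)$-independent at $v$, these vectors are linearly independent and $\dim_\Qo U = i$. For each vertex $w$ of $\Gamma$ put
\[
S(w) := \{\, e \in \str_\Gamma(w) \mid \alpha(e) \in U \,\},
\]
and let $\Xi$ be the subgraph of $\Gamma$ whose vertex set $V_\Xi$ is the connected component of $v$ in the graph with edge set $\bigcup_{w \in V} S(w)$, and whose edge set is $E_\Xi := \bigcup_{w \in V_\Xi} S(w)$. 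The opposite sign condition $\alpha(\bar e) = -\alpha(e)$ ensures that $E_\Xi$ is closed under $e \mapsto \bar e$, so $\Xi$ is indeed a subgraph of $\Gamma$.

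The first step is to establish the cardinality identity $|S(w)| = i$ for every $w \in V_\Xi$. The upper bound $|S(w)| \leq i$ holds at every vertex of $\Gamma$: if $i+1$ distinct edges at $w$ all had axial values in $U$, they would be linearly dependent in an $i$-dimensional space, contradicting $(j+1)$-independence applied to them (note $i+1 \leq j+1$). For the lower bound one proceeds inductively along paths in $\Xi$ starting at $v$: one has $\{e_1, \ldots, e_i\} \subseteq S(v)$ by construction, and if $w \in V_\Xi$ is reached from some $u \in V_\Xi$ via an edge $e \in S(u)$ with $t(e) = w$, then the congruence condition gives
\[
\alpha(\nabla_e(e')) \;=\; \alpha(e') + c_e(e')\,\alpha(e) \;\in\; U
\]
for every $e' \in S(u)$, producing $i$ distinct elements of $S(w)$ via the bijection $\nabla_e$.

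Once the cardinality is pinned down, $\Xi$ is $i$-regular and connected by construction, $\str_\Xi(v) = S(v) = \{e_1, \ldots, e_i\}$, and the same application of the congruence condition shows that $E_\Xi$ is closed under $\nabla$: whenever $e, e' \in E_\Xi$ share a source $w \in V_\Xi$, then $\alpha(e), \alpha(e') \in U$, so $\alpha(\nabla_e(e')) \in U$ and $t(e) \in V_\Xi$, whence $\nabla_e(e') \in S(t(e)) \subseteq E_\Xi$. Finiteness of $V$ is used only to ensure $V_\Xi$ is finite and plays no essential role in the logic. The single point that I expect to warrant a careful write-up is the linear-algebraic upper bound $|S(w)| \leq i$: it is precisely the mechanism by which $(j+1)$-independence prevents the transport closure from overshooting its expected size, and it is what ultimately translates the independence hypothesis into the completeness assertion.
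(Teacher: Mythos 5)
Your proof is correct, and it takes a genuinely different route from the paper's. The paper builds $\Xi$ ``from below'': it iterates the parallel transports $\Pi_e$ starting from $E'=\lb e_1,\dots,e_i\rb$, uses finiteness of $E_\Gamma$ to stabilize the resulting filtration, and only then invokes $(j+1)$-independence to rule out the star at $v$ growing beyond $E'$ (via the transport formula for $\alpha$ along a closed path based at $v$). You instead build $\Xi$ ``from above'': you fix the span $U$ of the given axial values and take the connected component of $v$ in the subgraph of all edges with axial value in $U$. The two ingredients are the same in both arguments --- the congruence condition shows that parallel transport preserves membership in $U$, and $(j+1)$-independence caps the number of edges at a vertex with values in an $i$-dimensional space --- but you deploy them in the opposite order, getting the uniform bound $|S(w)|\leq i$ a priori and then pushing the lower bound along the component. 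Your construction makes $\nabla$-invariance of $\Xi$ immediate (it is cut out by a $\nabla$-stable condition) and, as you note, does not actually need finiteness of $V$, which the paper's stabilization step does use (though even there one could take the union of the $P_i$); the paper's construction has the mild advantage of producing the \emph{minimal} face containing the prescribed edges. One small point worth making explicit in a final write-up: both your upper bound and the paper's final contradiction use that $(j+1)$-independence implies independence of any $i+1\leq j+1$ distinct edges at a vertex, which is the intended reading of Definition \ref{defin:chord}.
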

\begin{proof}
Fix a nonzero integer $s\leq j$. Let $E':=\lb e_1,\dots, e_{s}\rb$ be an $s$-element set of some mutually different edges in $\Gamma$ with a common origin $v$. In order to prove the claim it is enough to construct an $s$-face $\Xi$ in $\Gamma$ such that the inclusion
\begin{equation}\label{eq:facecond}
E'\subseteq E_{\Xi}, 
\end{equation}
holds. We give the inductive definition as follows:
\[
P_{i+1}:=P_{i}\cup \lb \Pi_{e} e'|\ e,e'\in P_{i}\rb,\ P_{0}:=E',\ i\geq 0.
\]
By the definition, the filtration $P_0\subseteq P_{1}\subseteq\cdots$ is bounded by the finite set $E_{\Gamma}$ from above. Hence, there exists $N\in \No$ such that $P_{i}=P_{N}$ holds for any $i\geq N$. 
Define the subgraph $\Xi$ in $\Gamma$ by the formulas
\[
V_{\Xi}:=\lb i(e)|\ e\in P_{N}\rb,\ E_{\Xi}:=P_{N}.
\]
The set $P_{N}$ is closed under reversion of an edge operation, because $\Pi_{e}(e)=\overline{e}\in P_{i+1}$ holds for any $e\in P_{i}$. By the condition, for any $e,e'\in P_{N}$ there exists an edge path $\gamma\subseteq\Xi$ such that $i(\gamma)=i(e)$ and $t(\gamma)=i(e')$ holds. Hence, $\Xi$ is a connected subgraph in $\Gamma$. It follows from the definition that $\Xi$ is a face of $\Gamma$. It remains to show that $\Xi$ is an $s$-face. Assume the contrary. Then there exists $e\in \str_{\Xi}(v)\setminus E'$. It follows from the definition that there exist $i=1,\dots,s$ and $\gamma\subseteq \Xi$ such that $i(\gamma)=t(\gamma)=v$ and 
\[
\Pi_{\gamma}(e')=e,
\]
holds. It follows from the formula \eqref{eq:compforchord} that 
\[
\alpha(\Pi_{\gamma}(e'))\in\Zo\la \alpha(e_j)|\ j=1,\dots,s\ra.
\]
Hence, the collection of $s+1$ vectors $\alpha(e),\alpha(e_j)$, $j=1,\dots,s$, is linearly dependent. However, this contradicts the condition of $(j+1)$-independency of $\Gamma$, because $s\leq q$. We conclude that $\Xi$ is an $s$-face, which proves the claim of the proposition.
\end{proof}

\begin{cor}\label{cor:nonextend}
Suppose that the underlying graph of $\Gamma$ has finitely many vertices. Then, if $\Gamma$ is a $(j+2)$-independent ($n$-independent, respectively) GKM-graph for some $j\in\Zo$, then any $r$-face (face, respectively) of $\Gamma$ is chordless, where $r=1,\dots,j$.
\end{cor}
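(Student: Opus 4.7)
The plan is to chain together Propositions \ref{pr:invsubg}, \ref{pr:omitcond}, and \ref{pr:labelinspan}: Proposition \ref{pr:invsubg} upgrades the independence hypothesis into a completeness statement, Proposition \ref{pr:omitcond} then uses the completeness to identify the holonomy of the loop formed by a chord and a path inside the face, and Proposition \ref{pr:labelinspan} converts that identification into the inclusion $2\alpha(e)\in\alpha\la\Xi\ra$, which is then incompatible with a sufficiently strong independence hypothesis.

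Concretely, I would first apply Proposition \ref{pr:invsubg} to the $(j+2)$-independent graph $\Gamma$ to conclude that $\Gamma$ is $(j+1)$-complete. Then fix any $r$-face $\Xi$ of $\Gamma$ with $1\leq r\leq j$ and argue by contradiction, assuming $\Xi$ admits a chord $e$. Since $\Xi$ is connected, pick an edge path $\gamma$ in $\Xi$ from $i(e)$ to $t(e)$. Because $r+1\leq j+1$, the $(j+1)$-completeness of $\Gamma$ entails $(r+1)$-completeness, so Proposition \ref{pr:omitcond} applies and yields $\Pi_{\gamma}e=\overline{e}$. Proposition \ref{pr:labelinspan} then delivers the inclusion $2\alpha(e)\in\alpha\la\Xi\ra$.

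Writing $\str_{\Xi}(i(e))=\{e_1,\dots,e_r\}$, the inclusion just obtained produces integers $c_1,\dots,c_r$ with $2\alpha(e)=\sum_{i=1}^{r}c_i\alpha(e_i)$. The edges $e,e_1,\dots,e_r\in\str_{\Gamma}(i(e))$ are pairwise distinct, since $e$ is transversal to $\Xi$, and their total count $r+1$ satisfies $r+1\leq j+2$, so $(j+2)$-independence forces their axial values to be $\Zo$-linearly independent. The displayed relation has $-2\neq 0$ as the coefficient of $\alpha(e)$, contradicting this independence. Hence $\Xi$ is chordless, proving the first assertion.

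For the parenthetical $n$-independent case, the same argument passes through essentially verbatim: $\Gamma$ is trivially $n$-complete (the ambient connected $n$-valent graph is itself an $n$-face of $\Gamma$), and the derivation above handles every face of dimension $r\leq n-1$ with $n$ in place of $j+2$; an $n$-face carries no transversal edges at all, so it is chordless for free. The main -- rather modest -- point to watch is that the ``$+2$'' in ``$(j+2)$-independent'' is used twice: once as input to Proposition \ref{pr:invsubg} to produce the required completeness, and once again to ensure that the $r+1\leq j+2$ vectors involved in the forbidden relation are forced to be linearly independent; $(j+1)$-independence alone would not be enough to close the argument.
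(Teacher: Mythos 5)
Your proof is correct and follows essentially the same route as the paper's: invoke Proposition \ref{pr:invsubg} to get $(j+1)$-completeness, chain Propositions \ref{pr:omitcond} and \ref{pr:labelinspan} to derive $2\alpha(e)\in\alpha\la\Xi\ra$ for a putative chord $e$, and contradict $(j+2)$-independence via the resulting dependence among the $r+1\leq j+2$ axial values at $i(e)$. Your explicit handling of the $n$-independent case (noting that $\Gamma$ itself serves as the $n$-face needed for Proposition \ref{pr:omitcond} when $r=n-1$) fills in a detail the paper leaves to ``similar'' reasoning.
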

\begin{proof}
Assume the contrary. Then there exist an $r$-face $\Xi$ of $\Gamma$ and its chord $e$, where $r\leq j$. One has $2\alpha(e)\notin \alpha\la \Xi\ra$, because $\Gamma$ is $(j+2)$-independent and $\Xi$ is $r$-regular, where $r\leq j$. By Proposition \ref{pr:invsubg}, $\Gamma$ is $(j+1)$-complete. Then one can apply Propositions \ref{pr:omitcond} and \ref{pr:labelinspan} in order to obtain $2\alpha(e)\in \alpha\la \Xi\ra$. This contradiction proves the first claim of the corollary. The proof of the second claim is similar to the proof of the first claim.
\end{proof}

\begin{defin}
Let $\Gamma'$, $\Gamma$ be two GKM-graphs with the same underlying graph $(V,E)$, the same connection $\nabla$, with the axial functions $\alpha'$, $\alpha$ taking values in $\Zo^{k'}$ and $\Zo^k$, respectively. The GKM-graph $\Gamma'$ is called an \textit{extension} of $\Gamma$ (see \cite{ku-19}), if there exists an epimorhism $p\colon \Zo^{k'}\to \Zo^k$ such that $p(\alpha'(e))=\alpha(e)$ holds for any $e\in E$. We say that an $(n,k)$-type GKM-graph $\Gamma$ \textit{has no nontrivial extensions} if for any $s>0$ it does not admit an extension to an $(n,k+s)$-type GKM-graph. (This terminology was proposed by S.~Kuroki.)
\end{defin}

The next corollary is a principal tool for the proof of Theorem \ref{thm:example}.

\begin{cor}\label{cor:specface}
Suppose that the underlying graph of $\Gamma$ has finitely many vertices. Then, if $\Gamma$ is a $(k+1)$-complete GKM-graph and there exists a $k$-face $\Xi$ of $\Gamma$ such that any transversal edge $e\in E_\Gamma$ to $\Xi$ is a chord for $\Xi$, then $\Gamma$ has no nontrivial extensions.
\end{cor}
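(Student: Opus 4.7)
The plan is to argue by contradiction: assume $\Gamma$ admits an extension $\Gamma'$ of $(n,k+s)$-type with $s>0$, and derive a rank inequality that forces $s\leq 0$.

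First I would observe that an extension $\Gamma'$ shares the underlying graph $(V,E)$ and connection $\nabla$ with $\Gamma$, and therefore has the same collection of faces. In particular, the $k$-face $\Xi$ and its chords/transversals are the same for $\Gamma$ and for $\Gamma'$, and the hypothesis of $(k+1)$-completeness (which is a condition on $(V,E,\nabla)$ alone) passes verbatim to $\Gamma'$. This allows me to apply Propositions \ref{pr:omitcond} and \ref{pr:labelinspan} to $\Gamma'$: for every chord $e$ of $\Xi$, picking any edge path $\gamma$ in $\Xi$ from $i(e)$ to $t(e)$, one has $\Pi_\gamma e=\overline{e}$ by Proposition \ref{pr:omitcond}, and then Proposition \ref{pr:labelinspan} yields
\[
2\alpha'(e)\in \alpha'\la \Xi\ra.
\]

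Next I would exploit the hypothesis that every transversal edge to $\Xi$ is a chord. Since every edge of $\Gamma$ is either an edge of $\Xi$ (in which case $\alpha'(e)\in \alpha'\la\Xi\ra$ trivially) or a chord of $\Xi$ (in which case $2\alpha'(e)\in \alpha'\la \Xi\ra$ by the previous paragraph), I conclude
\[
2\alpha'(e)\in \alpha'\la \Xi\ra\quad\text{for every } e\in E_{\Gamma}.
\]
The rank condition for $\Gamma'$ gives $\alpha'\la\Gamma\ra=\Zo^{k+s}$, so $2\Zo^{k+s}\subseteq \alpha'\la \Xi\ra$, which forces $\rk \alpha'\la \Xi\ra=k+s$.

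Finally I would compare this with the bound coming from $\Xi$ being a $k$-face: for any $v\in V_{\Xi}$, the submodule $\alpha'\la \Xi\ra$ is generated by the $k$ vectors $\{\alpha'(e)\mid e\in \str_{\Xi}(v)\}$, hence has rank at most $k$. Combining $k+s\leq k$ with $s>0$ gives the desired contradiction, so no nontrivial extension can exist. The only step requiring genuine care is the invocation of Proposition \ref{pr:labelinspan} for $\Gamma'$ rather than $\Gamma$: I would emphasize that chords, paths and the connection transport identity $\Pi_\gamma e=\overline{e}$ are purely combinatorial consequences of $(V,E,\nabla)$ and the $(k+1)$-completeness, so they hold in $\Gamma'$, while the congruence relation used in Proposition \ref{pr:labelinspan} holds for $\alpha'$ by definition of a GKM-graph.
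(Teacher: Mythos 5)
Your proof is correct and follows essentially the same route as the paper's: apply Propositions \ref{pr:omitcond} and \ref{pr:labelinspan} to the extended axial function $\widetilde{\alpha}$ to obtain $2\widetilde{\alpha}(e)\in\widetilde{\alpha}\la\Xi\ra$ for every edge at a vertex $v\in V_{\Xi}$, then derive the contradiction $k+s=\rk\widetilde{\alpha}\la\Xi\ra\leq k$; your extra remark that completeness, faces and chords are shared by $\Gamma$ and its extension is a point the paper leaves implicit. One small imprecision: it is not true that every edge of $\Gamma$ is either an edge of $\Xi$ or a chord of $\Xi$ (edges not incident to $V_{\Xi}$ are neither), but this is harmless because the rank condition is evaluated on $\str_{\Gamma}(v)$ for a fixed $v\in V_{\Xi}$, where the dichotomy does hold --- exactly as in the paper.
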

\begin{proof}
Suppose that there exists an extension of $\alpha$ to an axial function $\widetilde{\alpha}$ of rank $k+s$ for some $s>0$. Choose a vertex $v\in V_{\Xi}$. Then it follows from the condition by Propositions \ref{pr:omitcond} and \ref{pr:labelinspan} that $2\widetilde{\alpha}(e)\in \widetilde{\alpha}\la \Xi\ra$ holds for any edge $e\in \str_{\Gamma}(v)$. Hence, $k+s=\rk \widetilde{\alpha}= \rk \widetilde{\alpha}\la \Xi\ra$. However, by definition $\rk \widetilde{\alpha}\la \Xi\ra\leq k$. This contradiction proves the claim.
\end{proof}

\section{Face posets of a GKM-graph and of a GKM-manifold}\label{sec:lattices}

In this section we continue to recall some basic notions of GKM-theory and of the related \cite{ma-ma-pa-07, ay-ma-so-22} posets $S_{M}$, $S_\Gamma$ of faces arising from the orbit space $M/T$ and from the GKM-graph $\Gamma$ of a given GKM-manifold $M$ with the $T$-action, respectively. We compare some specific simplicial subposets in $S_{M}$, $S_\Gamma$ under assumption of $j$-general position for $M$. After that we recall the P.Hall formula for the Euler characteristic of an order complex for a finite simplicial poset which is used later in the text.

\begin{defin}\cite{ma-ma-pa-07,ay-ma-so-22}
For a GKM-graph $\Gamma$ the collection $S_{\Gamma}$ of all faces in $\Gamma$ is called a \textit{face poset of the GKM-graph} $\Gamma$ with the partial order given by inclusion of faces in $\Gamma$. 
\end{defin}

Due to \cite[Lemma 2.1]{ma-pa-06} one can give the following definition of a GKM-manifold that is equivalent to the standard one (e.g. see \cite{gu-za-01}).

\begin{defin}\cite{gkm-98,gu-za-01,ma-pa-06}
A smooth manifold $M^{2n}$ with an effective action of $T^k=(S^1)^k$ is called a \textit{GKM-manifold} if the following conditions hold:
\begin{itemize}
\item the set of $T^k$-fixed points $M^T$ in $M$ is finite and nonempty;\\
\item the tangent weights of the $T^k$-action at any $x\in M^T$ are pairwise linearly independent;\\
\item all odd cohomology groups of $M$ vanish, i.e. one has $H^{odd}(M;\ \Zo)=0$.
\end{itemize}
\end{defin}

\begin{rem}
To any complex GKM-manifold one associates a GKM-graph, e.g. see \cite{ku-09}. We notice that for an arbitrary GKM-manifold the opposite sign condition is in general satisfied only up to a sign. We also remark that it is possible to have loops and multiple edges for a GKM-action. In this paper we restrain from considering such torus actions and we use a restricted definition of a GKM-graph (where it is a simple graph). Let $T'$ and $T$ be two GKM-actions of tori on the same manifold $M$. The action of $T'$ is called an \textit{extension} of the action $T$ on $M$ if there is a group monomorphism $\pi\colon T\to T'$ that is equivariant with respect to these torus actions. In other words, the $T$-action is the restriction of the $T'$-action. The epimorphism 
\[
p\colon \Zo^{k'}\cong\Hom(T',S^1)\to \Hom(T,S^1)\cong\Zo^{k},
\]
corresponding to $\pi$ induces the extension of the GKM-graphs $\Gamma'$, $\Gamma$ corresponding to the $T'$- and the $T$-action, respectively.
\end{rem}

\begin{ex}
The natural $T^2$-action on $\mathcal{F}l_{3}$ has no notrivial extensions by proving that for the corresponding GKM-graph by Corollary \ref{cor:specface} (see Example \ref{ex:t2fl3}). This fact may also be easily obtained by the results of \cite{ku-19}, or by studying the automorphism group of the homogeneous space $\mathcal{F}l_{3}$ (in a different category of complex-analytic torus actions).
\end{ex}

Consider a GKM-action of $T=T^k$ on $M=M^{2n}$.

\begin{defin}\label{defn:facetop}\cite{ay-ma-so-22}
For a smooth $T$-action on $M$, consider the canonical projection $p\colon M\to Q:=M/T$ to the respective orbit space, and let
\begin{equation}\label{eq:orbf}
Q_0\subset Q_1\subset\cdots \subset Q_k=Q
\end{equation}
\[
Q_i:=p(M_i),\ M_i=\{x\in M\colon \dim Tx\leqslant i\},
\]
be the filtration on the orbit space $Q$, where $Tx$ denotes the $T$-orbit of $x$ in $M$. The closure of a connected component of $Q_i\setminus Q_{i-1}$ is called an \emph{$i$-face} (or a \textit{face}) $F$ of $Q$ if it contains at least one fixed point. 
\end{defin}

\begin{defin}\cite{ay-ma-so-22}
The \textit{poset of faces for the GKM-manifold} $M$ is the poset $S_{M}$ of faces of nonnegative dimension ordered by inclusion in the orbit space $Q$ of the $T$-manifold $M$. 
\end{defin}

Recall that a topological space $X$ is called \textit{$j$-acyclic} if $\tilde{H}^{i}(X)=0$ holds for any $i\leq j$, and \textit{acyclic}, if $\tilde{H}^{*}(X)=0$ holds. Let $S_{M}$ be the poset of faces for a GKM-manifold $M$. Let $S_{\Gamma}$ be the poset of faces of nonnegative dimension (ordered by inclusion) of a GKM-graph $\Gamma$. We need the following particular case of a theorem from \cite{ay-ma-so-22}.

\begin{thm}\cite[Theorem 1]{ay-ma-so-22}\label{thm:mp}
For any GKM-manifold $M$ of complexity $0$ in $n$-general position (that is, $k=n$ holds) the $(n-1)$-dimensional poset $\overline{S_{M}^{op}}$ is $(n-2)$-acyclic, where $n\geq 2$.
\end{thm}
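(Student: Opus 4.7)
The plan is to reduce Theorem \ref{thm:mp} to the Masuda--Panov structure theorem \cite{ma-pa-06} for torus manifolds with vanishing odd integral cohomology, and then to identify $\overline{S_M^{op}}$ with a nerve model for the boundary of the orbit space $Q = M/T$.

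First, I would observe that under the hypotheses $k = n$ and $n$-general position, at every fixed point the $n$ tangent weights of the $T^n$-action on $M^{2n}$ are linearly independent. Combined with the slice theorem this forces the action to be locally standard, so $Q$ acquires a natural structure of a manifold with corners whose face stratification coincides, via Definition \ref{defn:facetop}, with $S_M$ (the top element being $Q$ itself, as the unique $n$-face). Together with the GKM hypothesis $H^{\mathrm{odd}}(M;\Zo) = 0$, this places us exactly inside the Masuda--Panov framework, so every face of $Q$ is $\Zo$-acyclic and any nonempty intersection of faces is again a face.

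Next I would identify $\overline{S_M^{op}}$ — the order complex of $S_M \setminus \lb Q\rb$, whose maximal chains have length $n$ and hence dimension $n-1$ — with the nerve of the cover of $\partial Q$ by its codimension-one faces. The homological nerve lemma, applicable thanks to face-acyclicity and closure of the cover under intersections, then yields a homology isomorphism $\Hr_{*}(\overline{S_M^{op}}) \cong \Hr_{*}(\partial Q)$. Finally, Poincar\'e--Lefschetz duality applied to the acyclic homology manifold with boundary $Q$ shows that $\partial Q$ is a homology $(n-1)$-sphere, and hence $\overline{S_M^{op}}$ is $(n-2)$-acyclic as required (with top reduced homology isomorphic to $\Zo$, although only the vanishing in lower degrees is asserted).

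The main obstacle is the homological nerve step carried out in the setting of a homology manifold with corners, where one cannot rely on classical topological transversality. This is handled in \cite{ma-pa-06} by an inductive cellular argument stripping off faces one codimension at a time; equivalently, face-acyclicity itself is proved by induction on codimension, using that the $T$-preimage of a codimension-$r$ face of $Q$ is again a torus manifold with vanishing odd cohomology, now for a quotient torus of dimension $n-r$. This inductive descent through face strata is where the real work lies, and a self-contained proof would essentially reproduce the corresponding argument of \cite{ma-pa-06}.
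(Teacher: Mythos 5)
First, note that the paper does not prove this statement at all: it is imported verbatim as \cite[Theorem 1]{ay-ma-so-22} (with the complexity-$0$ case going back to \cite{ma-pa-06}), so your proposal can only be judged on its own merits. Your overall strategy --- reduce to the Masuda--Panov setting, identify the homology of $\Delta(\overline{S_M^{op}})$ with that of $\partial Q$, and then apply Poincar\'e--Lefschetz duality to the acyclic manifold with corners $Q$ to see that $\partial Q$ is a homology $(n-1)$-sphere --- is indeed the standard route and would yield the claim. But two of your intermediate assertions are wrong as stated. The smaller one: local standardness does \emph{not} follow from $n$-general position plus the slice theorem. General position only controls isotropy representations at fixed points; away from the fixed points an effective torus manifold can fail to be locally standard. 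What makes the argument work is precisely the hypothesis $H^{\odd}(M;\Zo)=0$, from which Masuda--Panov \emph{derive} local standardness and the acyclicity of all faces; you should cite that implication rather than the slice theorem.

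The more serious gap is the claim that ``any nonempty intersection of faces is again a face,'' which you use to replace $\Delta(\overline{S_M^{op}})$ by the nerve of the cover of $\partial Q$ by facets. This is false under the exact hypotheses of the theorem. Take the standard $T^2$-action on $S^4$ (complexity $0$, in general position, $H^{\odd}=0$): the orbit space is a bigon whose two facets intersect in the two vertices, a disconnected set. Here the nerve of the facet cover is a $1$-simplex, hence contractible, while $\Delta(\overline{S_M^{op}})$ is a $4$-cycle, homology equivalent to $\partial Q\cong S^1$ --- so the nerve-of-facets model computes the wrong answer and your homology isomorphism $\Hr_*(\overline{S_M^{op}})\cong \Hr_*(\partial Q)$ does not follow from the classical nerve lemma. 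The correct statement requires the generalized nerve (or Mayer--Vietoris spectral sequence) for the poset of \emph{connected components} of intersections of facets, which is by definition the face poset $S_M$ itself; this is exactly the content of \cite[Proposition 5.14]{ma-pa-06}, quoted elsewhere in the paper as $H^*(\Delta(S_M))\cong H^*(Q)$, together with its relative version for $\partial Q$. With that substitution (and the orientability of $Q$, which follows from $H^1(Q;\Zo_2)=0$, needed for the duality step), your argument closes up.
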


\begin{lem}\cite[p.5, Lemma 2.9]{ay-ma-so-22}\label{lm:ay2}
The full preimage $M_F=p^{-1}(F)$ of any face $F\subseteq Q$ is a smooth submanifold in $M$ called a face submanifold in $M$.
\end{lem}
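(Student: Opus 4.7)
The plan is to identify $M_F$ with a connected component of the fixed-point submanifold $M^H$ for an appropriate subtorus $H\subseteq T$, and then invoke the classical fact (due to Koszul) that the fixed-point set of a smooth compact Lie group action is a smooth closed submanifold.

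First, since $F$ contains at least one $T$-fixed point $x_0$ by Definition \ref{defn:facetop}, I would apply the slice theorem at $x_0$ to obtain a $T$-equivariant diffeomorphism of a $T$-invariant neighborhood $U$ of $x_0$ onto a neighborhood of $0$ in the tangent representation $T_{x_0}M$. The GKM hypothesis (pairwise linearly independent tangent weights at each fixed point) splits $T_{x_0}M=\bigoplus_{i=1}^n L_i$ into $2$-dimensional weight subspaces with weights $\alpha_1,\dots,\alpha_n\in\Zo^k$. The orbit space $U/T$ is then stratified by subsets $J\subseteq\{1,\dots,n\}$, where the stratum indexed by $J$ is the image under $p$ of $\bigoplus_{j\in J}L_j$ minus lower strata. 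The face $F$ meets $U/T$ along a unique stratum, determined by some subset $J_0$ with $\rk\Zo\la\alpha_j\mid j\in J_0\ra=\dim F$; I would then define
\[
H:=\bigcap_{j\in J_0}\ker\alpha_j\ \subseteq\ T.
\]
Locally at $x_0$, the preimage $p^{-1}(F)\cap U$ corresponds under the slice to $\bigoplus_{j\in J_0}L_j=(T_{x_0}M)^H$, so it coincides with the connected component of $M^H$ through $x_0$ intersected with $U$.

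To globalize, I would take $N$ to be the connected component of $M^H$ containing $x_0$ and argue $M_F=N$. The key point is that $H$ is independent of the chosen fixed point: along the relative interior of $F$ (a connected component of $Q_{\dim F}\setminus Q_{\dim F-1}$) the stabilizer is locally constant and equals $H$ at $x_0$, so performing the local analysis at any other fixed point $x_1\in F$ recovers the same subtorus. This gives $p^{-1}(F)=N$ locally at every fixed point in $F$, and $T$-invariance together with connectedness of $F$ propagates the equality to all of $M$. Smoothness of $M_F$ then follows from smoothness of $M^H$.

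The main obstacle I expect is the global identification $M_F=N$, specifically the equality $p(N)=F$. The inclusion $F\subseteq p(N)$ follows once we know $N$ meets every orbit over $F$ and uses connectedness. The reverse inclusion $p(N)\subseteq F$ requires ruling out that $N$ projects onto a strictly larger (or different) face, which could in principle occur if some orbits in $N$ fall into $Q_{\dim F-1}$ but close up against a different component of $Q_{\dim F}\setminus Q_{\dim F-1}$. This is controlled by the pairwise linear independence of weights at every fixed point of $N$ (ensuring that the generic orbit in $N$ has stabilizer exactly $H$ and thus lies in the open stratum of $F$) together with the hypothesis that $F$ is the closure of a \emph{single} component. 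Once this is verified, smoothness of $M_F$ is immediate from the standard fact about fixed-point manifolds.
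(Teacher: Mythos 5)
First, a point of reference: the paper does not actually prove this lemma --- it is imported verbatim from \cite[Lemma 2.9]{ay-ma-so-22} --- so there is no in-text argument to compare yours against. Your strategy (realize $M_F$ as a connected component of the fixed-point set of a subtorus and quote smoothness of fixed-point sets of compact group actions) is the standard one, and it is also exactly how the lemma is used later in the text: in Proposition \ref{pr:relposets} the face submanifold through $v$ is precisely the component of $M^{G_0}$ for the subtorus $G_0$ cut out by the weights spanning the face.

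There is, however, one concrete slip in your local step. It is not true in general that $F$ meets $U/T$ along a \emph{unique} stratum $J_0$: in positive complexity, where the weights are only pairwise independent and not in general position, every subset $J$ with $\Ro\la \alpha_j\mid j\in J\ra=\Ro\la \alpha_j\mid j\in J_0\ra$ gives a stratum of the same orbit dimension, and since $J\cup J_0$ has the same span, the closure relations glue all of these into one connected component of $Q_i\setminus Q_{i-1}$, hence into the same face. So $J_0$ must be taken to be the saturated index set $\lb j\mid \alpha_j\in\Ro\la \alpha_{j'}\mid j'\in J_0\ra\rb$; with a non-maximal choice both equalities $p^{-1}(F)\cap U=\bigoplus_{j\in J_0}L_j$ and $\bigoplus_{j\in J_0}L_j=(T_{x_0}M)^{H}$ can fail. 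Relatedly, $H=\bigcap_{j\in J_0}\ker\alpha_j$ may be disconnected, and a torsion element of $H$ can act nontrivially on a weight space $L_{j'}$ with $\alpha_{j'}$ in the rational span of $\lb\alpha_j\mid j\in J_0\rb$, making $(T_{x_0}M)^{H}$ too small; it is safer to work with the identity component $H_0$ throughout. In the regime where the paper actually invokes the lemma ($(j+1)$-general position, faces of dimension at most $j$) the stratum is automatically saturated and your argument goes through as written. The globalization gap you flag yourself is real but closable along the route you indicate: $M^{H_0}$ is closed, the set of points of the relative interior $F^{\circ}$ whose fibre lies in $M^{H_0}$ is open and closed in $F^{\circ}$ by the slice theorem and constancy of the orbit dimension, so $M_F\subseteq M^{H_0}$ and, being connected, $M_F\subseteq N$; conversely the principal stratum of $N$ is connected, maps into $Q_i\setminus Q_{i-1}$ and meets $F^{\circ}$, whence $p(N)\subseteq\overline{F^{\circ}}=F$.
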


The claim of the following proposition is reminiscent to \cite[Lemma 3.8]{ay-ma-so-22} (although not quite the same).

\begin{prop}\label{pr:relposets}
For a GKM-manifold $M$ in $(j+1)$-general position for some $j\geq 1$ the following claims hold. 

$(i)$ For any $q\leq j$, any $q$-face $\Xi$ in $\Gamma$ is an equivariant $1$-skeleton of a face submanifold in $M$ and the GKM-graph $\Xi$ is a torus graph. 

$(ii)$ The span $\alpha\la \Xi\ra$ of $\Xi$ splits off as a direct factor in $\Zo^k$.

$(iii)$ The posets $(S_{M})_{\leq s(\Xi)}$ and $(S_{\Gamma})_{\leq \Xi}$ are isomorphic for any face $\Xi$ of $\Gamma$ such that $\dim \Xi\leq j$, where $s(\Xi)$ is the face in $M$ corresponding to $\Xi$ by $(i)$.
\end{prop}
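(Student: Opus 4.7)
The plan is to build, for any $q$-face $\Xi$ of $\Gamma$ with $q\leq j$, a face submanifold $s(\Xi)\subseteq M$ whose equivariant $1$-skeleton recovers $\Xi$; parts (i)--(ii) then follow from the construction and (iii) packages them into a poset isomorphism. I would fix a vertex $v\in V_{\Xi}$ with corresponding fixed point $p\in M^T$ and write $\str_{\Xi}(v)=\{e_1,\dots,e_q\}$ with weights $\beta_i:=\alpha(e_i)$. Since $\Gamma$ is $(j+1)$-independent and $q\leq j$, the weights $\beta_1,\dots,\beta_q$ together with any additional tangent weight at $p$ are linearly independent. Define $T_{\Xi}\subseteq T$ as the identity component of $\bigcap_{i=1}^{q}\ker\beta_i$ (a subtorus of codimension $q$) and take $N$ to be the connected component of $M^{T_{\Xi}}$ through $p$, a smooth $T$-invariant submanifold by Lemma \ref{lm:ay2}. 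The $(j+1)$-general position hypothesis ensures that each of the remaining $n-q$ weights at $p$ is nontrivial on $T_{\Xi}$, so $T_pN$ has weights exactly $\beta_1,\dots,\beta_q$, forcing $\dim_{\Ro}N=2q$ and placing each invariant $2$-sphere $S^2_{e_i}$ inside $N$. The equivariant $1$-skeleton of $N$ is then a face of $\Gamma$ containing $v$ with star equal to $\str_{\Xi}(v)$; by Proposition \ref{pr:invsubg} (which yields uniqueness of a $q$-face determined by its star at a vertex under $(j+1)$-independence) this skeleton must coincide with $\Xi$, proving (i). The torus graph property comes from $N$ being a torus manifold: $T/T_{\Xi}$ acts on the $2q$-manifold $N$ with isolated fixed points, and $N$ inherits vanishing odd cohomology from $M$ by the standard equivariant-formality argument.

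For (ii), the kernel of the restriction $\Zo^k=\Hom(T,S^1)\to\Hom(T_{\Xi},S^1)$ is a rank-$q$ direct summand $\Hom(T/T_{\Xi},S^1)\subseteq\Zo^k$ containing $\alpha\la\Xi\ra$; effectivity of the induced $T/T_{\Xi}$-action on the torus manifold $N$ (after enlarging $T_{\Xi}$ to the full pointwise stabilizer of $N$ if necessary) makes the weights $\beta_i$ generate this summand, yielding $\alpha\la\Xi\ra=\Hom(T/T_{\Xi},S^1)$. For (iii), the assignment $\Xi\mapsto s(\Xi)$ is order-preserving, because a subface $\Xi'\subseteq\Xi$ gives $T_{\Xi'}\supseteq T_{\Xi}$ and therefore $s(\Xi')\subseteq s(\Xi)$. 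Conversely, any $F\leq s(\Xi)$ in $S_M$ is itself a face submanifold of $M$, and applying (i) to its equivariant $1$-skeleton produces a face of $\Gamma$ of dimension at most $q\leq j$ contained in $\Xi$. The two assignments are mutually inverse by the uniqueness in Proposition \ref{pr:invsubg}, giving the isomorphism $(S_\Gamma)_{\leq\Xi}\cong(S_M)_{\leq s(\Xi)}$.

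The hardest step will be (ii): showing that $\alpha\la\Xi\ra$ fills out the entire rank-$q$ summand rather than just a proper finite-index subgroup. This reduces to verifying that the pointwise stabilizer of $N$ in $T$ is connected and equal to $T_{\Xi}$, which should follow by combining the effectivity of the $T$-action on $M$ (so that the full set of tangent weights at $p$ generates $\Zo^k$) with the $(j+1)$-general position hypothesis to rule out torsion in the relevant quotient. A subsidiary technical point is transferring the vanishing of odd cohomology from $M$ to the face submanifold $N$, which I would handle along the lines of the proof of Theorem \ref{thm:mp}.
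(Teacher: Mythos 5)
Your proposal is correct and follows essentially the same route as the paper: both take the identity component of the common kernel of the weights $\alpha(e_1),\dots,\alpha(e_q)$ (your $T_{\Xi}$ is the paper's $G_0$), pass to the connected component of its fixed-point set through $p$, use $(j+1)$-general position to show the tangent weights there are exactly $\alpha(e_1),\dots,\alpha(e_q)$, and deduce $(ii)$ from effectivity of the quotient torus action on that $2q$-dimensional face submanifold. The "hardest step" you flag is resolved in the paper exactly as you anticipate, by the equality of the weight set of the $T/G_0$-action with $\lb\alpha(e_1),\dots,\alpha(e_q)\rb$ forcing $L=L_0$.
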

\begin{proof}
Choose $v\in V_{\Xi}$ and let $\str_{\Xi}(v)=\lb e_1,\dots,e_q\rb$. Let $G$ be a closed subgroup in $T$ corresponding to the sublattice $L:=\Zo\la \alpha_{i}|\ i=1,\dots,q\ra$ in $\Zo^k$, where $\alpha_{i}:=\alpha(e_i)$. Let $G_0$ be the identity component (in particular, a torus) of $G$. Notice that the sublattice $L_0\subset \Zo^k$ corresponding to the subtorus $G_{0}\subset T$ splits off as a direct factor in $\Zo^k$ and that there is a lattice embedding $L\subseteq L_0$ of a finite index. The connected component $Y$ of $M^{G_0}$ such that $v\in Y$ is a smooth manifold with effective $T'$-action by Lemma \ref{lm:ay2}, where $T':=T/G_{0}$. Notice that $Y^{T'}\subseteq M^{T}$. The set of weights $\beta_{1},\dots,\beta_{r}$ of the $T'$-action on $Y$ at $v$ embed to the set of weights of the $T$-action on $M$ at $v$. One has $\Zo\la \beta_{j}|\ j=1,\dots,r\ra=L_0$. Hence, $\beta_j,\alpha_1,\dots,\alpha_q$ are linearly dependent for any $j=1,\dots,r$. Then by linear independence condition we conclude that $\lb \beta_{1},\dots,\beta_r\rb=\lb \alpha_1,\dots,\alpha_q\rb$ holds. Therefore, $\dim Y=2q$, $Y$ is a GKM-manifold, its equivariant $1$-skeleton $Y_1$ is a GKM-graph $\Xi$ of type $(q,q)$ and $\Xi$ is a face of the GKM-graph $\Gamma$. Notice that this implies $L=L_0$. Hence, the claims $(i)$, $(ii)$ are proved. By the definition, one has $(S_{M})_{\leq s}\subseteq (S_{\Gamma})_{\leq s}$. The inverse inclusion holds by $(i)$. This proves $(iii)$. The proof is complete.
\end{proof}

Let $P$ be a finite poset \cite{st-86}. Recall the following definitions.

\begin{defin}\cite{st-86}
The \textit{order complex} of a finite poset $P$ is the simplicial complex 
\[
\Delta (P):=\lb \sigma=\lb I_1, I_2,\dots, I_{k+1}\rb\in 2^{P}|\ 
I_1< I_2<\cdots< I_{k+1},\ k\geq 0\rb,
\]
on the vertex set $P$ consisting of chains of increasing elements in $P$. By definition the $q$-faces of a simplex $\sigma$ are $I_{i_1}< I_{i_2}<\cdots< I_{i_{q+1}}$, where $1\leq i_1<\cdots<i_{q+1}\leq k$ are arbitrary numbers (i.e. the chains obtained by dropping the elements of $\sigma$), $0\leq q\leq k$. 
\end{defin}

\begin{defin}\cite{st-86}
The poset $P$ with the least element $\hat{0}$ is called a \textit{simplicial poset} if the subposet $[\hat{0},x]$ of $P$ is a Boolean lattice for any $x\in P$. For any element $x$ of a simplicial poset $P$ a \textit{length} $l(x)$ of $x$ is the length of a maximal chain in $[\hat{0},x]$. Here $l(\hat{0}):=0$. Define the \emph{dimension} of a simplicial poset $P$ to be the number $\dim P:=\dim \Delta(P)=\max_{x\in P} l(x)$. For a simplicial poset $P$ let 
\[
f_{i}(P):=|\lb x\in P|\ l(x)=i+1\rb|,
\]
be the number of elements in $P$ of length $i+1$, where $i\geq 0$. In particular, $f_{-1}(P)=1$.
\end{defin}

\begin{rem}
The poset $S_{\Gamma}^{op}$ has the least element $\Gamma$ by the definition and is therefore acyclic. For a torus action with a dense open orbit the poset $S_{M}^{op}$ has the least element and is contractible, too. However, for an arbitrary $T$-action on $M$ the poset $S_{M}^{op}$ is neither acyclic nor simplicial, in general. This is due to the fact that the orbit space $Q$ is a homological cell complex and the group isomorphism $H^*(\Delta(S_{M}))\cong H^*(Q)$ holds, e.g. see \cite[Proposition 5.14]{ma-pa-06}. For instance, it can be checked for the $T^2$-action on $\mathcal{F}l(3)$ (Example \ref{ex:t2fl3}) that the corresponding orbit space is homeomorphic to a sphere $Q\cong S^4$, e.g. see \cite{ay-ma-19}.
\end{rem}

In the following we need the following well-known Philip Hall's theorem.

\begin{prop}\cite[Proposition 6]{ro-64}\label{pr:euch}
Let $S$ be a simplicial poset of dimension $d$. Then the Euler characteristic $\tilde{\chi}(\Delta (\overline{S}))$ of the order complex for $\overline{S}:=S\setminus \hat{0}$ in the reduced simplicial homology $\tilde{H}_*(\Delta (\overline{S}))$ is given by the formula:
\[
\tilde{\chi}(\Delta (\overline{S}))=\sum_{i=-1}^{d-1} (-1)^i f_{i}(S).
\]
\end{prop}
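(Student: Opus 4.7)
The plan is a Fubini-type rearrangement of the defining alternating sum, exploiting the fact that in a simplicial poset every principal interval $[\hat{0},x]$ is Boolean.

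First I would write
\[
\tilde{\chi}(\Delta(\overline{S})) \;=\; -1 + \sum_{k\geq 0} (-1)^{k} c_{k},
\]
where $c_k$ counts strict chains $y_0<y_1<\cdots<y_k$ in $\overline{S}$, and then group these chains by their maximum element: $c_k=\sum_{x\in\overline{S}} a_k(x)$, with $a_k(x)$ counting chains of length $k+1$ ending in $x$. Since $a_0(x)=1$ and, for $k\geq 1$, each such chain is obtained uniquely by appending $x$ to a chain of $k$ elements of the open interval $(\hat{0},x):=[\hat{0},x]\setminus\{\hat{0},x\}$, a short manipulation identifies the inner sum as
\[
N(x) \;:=\; \sum_{k\geq 0}(-1)^{k} a_k(x) \;=\; -\tilde{\chi}\bigl(\Delta((\hat{0},x))\bigr).
\]

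Next, I would evaluate $N(x)$ using the Boolean hypothesis on intervals. Setting $n := l(x)$, the interval $[\hat{0},x]$ is isomorphic to $B_n$, so $(\hat{0},x)$ is the poset of proper non-empty subsets of an $n$-element set, and $\Delta((\hat{0},x))$ is the barycentric subdivision of $\partial\Delta^{n-1}$, hence homeomorphic to $S^{n-2}$ (interpreted as the empty complex with $\tilde{\chi}=-1$ when $n=1$). Thus $\tilde{\chi}(\Delta((\hat{0},x)))=(-1)^{n}$ in every case, which yields $N(x)=(-1)^{l(x)-1}$.

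Finally, I would sum $N(x)$ over $x\in\overline{S}$ and reorganise by length, using that the number of $x$ with $l(x)=i+1$ is exactly $f_i(S)$ and that $f_{-1}(S)=1$ by convention. This turns the $-1$ term into $-f_{-1}(S)$ and the sum $\sum_x N(x)$ into $\sum_{i=0}^{d-1}(-1)^i f_i(S)$, producing the formula in the proposition. The main obstacle, really the only one, is the identification of $\Delta((\hat{0},x))$ with a sphere triangulation, which is classical but crucial and is supplied by the simplicial-poset assumption; the remainder is bookkeeping.
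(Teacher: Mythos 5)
Your argument is correct. Note, however, that the paper does not prove this proposition at all: it is quoted verbatim from Rota's paper (Philip Hall's theorem, \cite[Proposition 6]{ro-64}), so there is no in-text proof to compare against. What you supply is the standard self-contained derivation: group the chains of $\overline{S}$ by their top element $x$, identify the resulting alternating sum over chains with top $x$ as $-\tilde{\chi}\bigl(\Delta((\hat{0},x))\bigr)$, and use the Boolean hypothesis to see that $\Delta((\hat{0},x))$ is the barycentric subdivision of $\partial\Delta^{l(x)-1}$, hence has reduced Euler characteristic $(-1)^{l(x)}$ (equivalently, $\mu(\hat{0},x)=(-1)^{l(x)}$ in $B_{l(x)}$); the bookkeeping at the end, including the $n=1$ degenerate case and the role of $f_{-1}(S)=1$, is handled correctly. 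A slightly shorter route to the same conclusion, worth knowing, is to observe that $\Delta(\overline{S})$ is precisely the barycentric subdivision of the simplicial cell complex realizing $S$, which has $f_i(S)$ cells in dimension $i$; since subdivision does not change the underlying space, the Euler characteristics agree and the formula drops out at once. Your chain-by-chain computation is the combinatorial shadow of that observation and has the advantage of not presupposing the geometric realization of a simplicial poset.
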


The computation of Euler characteristic for certain face subposets in $S_{\Gamma}$ for a $j$-complete GKM-graph $\Gamma$ is possible (by using Proposition \ref{pr:euch}) due to the following proposition.

\begin{prop}\label{pr:condgenpos}
Let $\Gamma$ be a $j$-complete GKM-graph for some $j\geq 1$. Then for any $j$-face $\Xi$ of $\Gamma$ the poset $(S_{\Gamma})_{\leq \Xi}^{op}$ is a simplicial poset of dimension $\dim\Xi$. In particular, for any $\Omega\in (S_{\Gamma})_{\leq \Xi}^{op}$ one has $l(\Omega)=j-\dim \Omega$ in $(S_{\Gamma})_{\leq \Xi}^{op}$, and $f_{i}((S_{\Gamma})_{\leq \Xi}^{op})$ is equal to the number of $(j-i-1)$-dimensional faces in $(S_{\Gamma})_{\leq \Xi}$.
\end{prop}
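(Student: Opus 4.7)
The plan is to prove the proposition by first establishing that $\Xi$, with the inherited connection and axial function, is itself a $j$-complete GKM-graph, and then by parametrizing all faces of $\Xi$ containing a chosen vertex $v$ by the subsets of the star $\str_{\Xi}(v)$. This parametrization will identify each interval $[\Omega,\Xi]$ in the original order with a Boolean lattice, which dualizes to the Boolean interval $[\hat 0,\Omega]$ in $(S_{\Gamma})_{\leq\Xi}^{op}$ required by the definition of a simplicial poset.

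First I would verify $j$-completeness of $\Xi$. Given any $v\in V_{\Xi}$ and any $i\leq j$ edges $e_1,\dots,e_i\in \str_{\Xi}(v)\subseteq \str_{\Gamma}(v)$, the $j$-completeness of $\Gamma$ provides an $i$-face $\Phi$ of $\Gamma$ with $\str_{\Phi}(v)=\lb e_1,\dots,e_i\rb$. Since $\Phi$ arises as the closure of $\lb e_1,\dots,e_i\rb$ under parallel transport (exactly as in the proof of Proposition \ref{pr:invsubg}) and $\Xi$ is by its face property already closed under parallel transport, the inclusion $\Phi\subseteq \Xi$ follows immediately.

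Next I would fix $v\in V_{\Omega}$, which by $\Omega\subseteq\Phi\subseteq\Xi$ lies in every face of the interval of interest, and consider the map $\Phi\mapsto \str_{\Phi}(v)$ from faces of $\Xi$ containing $v$ to subsets of the $j$-element set $\str_{\Xi}(v)$. Surjectivity and correct dimension counting follow from the $j$-completeness of $\Xi$ just established, and the map clearly preserves inclusions. The main obstacle is injectivity: if $\Phi_1,\Phi_2$ are faces of $\Xi$ with $\str_{\Phi_1}(v)=\str_{\Phi_2}(v)$, I would argue by induction on the distance from $v$ within each $\Phi_\ell$, using the face axiom $\nabla_{e}(\str_{\Phi_\ell}(i(e)))=\str_{\Phi_\ell}(t(e))$ valid for $e\in E_{\Phi_\ell}$, to conclude that $\str_{\Phi_1}(w)=\str_{\Phi_2}(w)$ at every vertex $w$ reachable from $v$, and hence $V_{\Phi_1}=V_{\Phi_2}$ and $E_{\Phi_1}=E_{\Phi_2}$ by connectedness of each $\Phi_\ell$.

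The final step is to read off the conclusion. Under the bijection above, the interval $[\Omega,\Xi]$ in $(S_{\Gamma})_{\leq\Xi}$ matches the set of subsets of $\str_{\Xi}(v)$ containing $\str_{\Omega}(v)$, which is the Boolean lattice $2^{\str_{\Xi}(v)\setminus \str_{\Omega}(v)}$ of rank $j-\dim\Omega$. Since Boolean lattices are self-dual, the corresponding interval $[\hat 0,\Omega]$ in the opposite poset $(S_{\Gamma})_{\leq\Xi}^{op}$ is again Boolean of the same rank, proving that $(S_{\Gamma})_{\leq\Xi}^{op}$ is simplicial and giving the length formula $l(\Omega)=j-\dim\Omega$. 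The maximum of $l$ is attained on $0$-faces (single vertices of $\Xi$), so $\dim (S_{\Gamma})_{\leq\Xi}^{op}=j=\dim\Xi$; and the stated formula for $f_i$ follows by simply counting the $\Omega$ with $l(\Omega)=i+1$, i.e., those of dimension $j-i-1$ in $(S_{\Gamma})_{\leq\Xi}$.
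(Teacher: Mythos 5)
Your proposal is correct and follows essentially the same route as the paper: both identify the interval $[\Omega,\Xi]$ with the Boolean lattice of subsets of $\str_{\Xi}(v)\setminus\str_{\Omega}(v)$ via the map $\Phi\mapsto\str_{\Phi}(v)$ and then dualize. The paper states this bijection in one line, whereas you supply the supporting details (surjectivity from $j$-completeness restricted to $\Xi$, injectivity from the fact that a face is determined by its star at a vertex via parallel transport), which is a legitimate filling-in rather than a different argument.
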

\begin{proof}
In order to prove the claim of the proposition it is enough to show that for any face $\Omega$ in $\Xi$ the poset $[\Omega,\Xi]=\lb \Phi\in S_{\Gamma}|\ \Omega\subseteq \Phi\subseteq \Xi\rb$ is isomorphic to the poset of faces in $\Delta^{j-\dim \Omega}$. Let $v\in V_{\Omega}$. Since the connection of $\Gamma$ is $j$-independent, any face $\Phi\in [\Omega,\Xi]$ is uniquely determined by the collection $C(\Phi)$ of $\dim \Phi-\dim \Omega$ mutually different elements from $\str_{\Xi} v\setminus\str_{\Omega} v$, and vice versa. Moreover, for any $\Phi_1,\Phi_2\in [\Omega,\Xi]$ one has $\Phi_1\subseteq \Phi_2$ iff $C(\Phi_1)\subseteq C(\Phi_2)$. This implies the necessary claim. The proof is complete.
\end{proof}

\section{A periodic GKM-graph and its quotient}\label{sec:periodic}

In this section we give a detailed construction of the GKM-graph suitable for the proof of Theorem \ref{thm:example} and study some of its properties. 

\begin{con}[Graph $\Gamma'$]\label{constr:gamma'}
Let $I^{d}_{R}(x)$ be the edge graph of the cube
\[
\Io^{d}_{R}(x):=\lb y=(y_1,\dots,y_d)\in \Ro^d|\ |x_i-y_i|\leq R\rb,
\]
with center at $x=(x_1,\dots,x_d)\in\Ro^d$ and with edges of length $2R$. For any $d\geq 1$ we define the graph $\Gamma'=\Gamma'(d)$ embedded into $\Ro^d$ as the union of the following graphs:
\begin{enumerate}[label=(\roman*)]
\item Graph $I^{d}_{1/6}(x)$, where $x$ runs over all points $\Zo^d\subset\Ro^d$ with integral coordinates;
\item Graph $I^{d}_{1/6}(x+\frac{1}{2}\cdot(1, \dots, 1))$, where $x$ runs over $\Zo^d$;
\item A \textit{diagonal}, that is, an edge of the form
\[
D(x,u):=(x+\frac{1}{6} \cdot\sum_{i=1}^d (-1)^{u_i} e_i,\ x+\frac{1}{3}\cdot \sum_{i=1}^d (-1)^{u_i} e_i),
\]
(and its inverse), where $x$ runs over $\Zo^d$, $u=(u_1,\dots,u_d)$ runs over $\lb \pm 1\rb^d$, and $e_1,\dots,e_d$ is the standard basis of $\Ro^d$.
\end{enumerate}

Notice that the graph $\Gamma'$ is a $(d+1)$-regular connected graph with infinite set of vertices. We call a \textit{cubical subgraph} any subgraph in $\Gamma'$ of the form $I_{1/6}^{d}(x)$, where $x\in L_d:=\Zo^d\sqcup(\Zo^d+\frac{1}{2}(1,\dots,1))\subset \Ro^d$. For any vertex $v\in V_{\Gamma'}$ denote by $\Cube(v)$ a unique cubical subgraph in $\Gamma'$ such that $v\in V_{\Cube(v)}$ holds.
\end{con}

\begin{con}[Functions $\varepsilon^{i}_{j}$]\label{constr:sf}
For any $d\geq 1$ define the functions $\varepsilon_{i}^{j}\colon V_{\Gamma'}\to \lb\ \pm 1\rb$, where $i=1\dots,d+1$ and $j\in\No$. For any $x=(x_1,\dots,x_d)\in\Zo^d$ and any vertex $y$ of $I^{d}_{1/6}(x)$ let
\begin{equation}\label{eq:epsdef}
\varepsilon_{i}^{j}(y):=(-1)^{\floor{\frac{x_i}{2^{j-1}}}};\ i=1,\dots,d;\ j\in\No.
\end{equation}
By definition, the function $\varepsilon_{i}^{j}$ is then uniquely defined by taking the same values on the vertices of any diagonal of $\Gamma'$, where $i=1,\dots,d$ and $j\in\No$. Define
\begin{equation}\label{eq:epsdeflast}
\varepsilon_{d+1}^{j}(y):=(-1)^{\floor{\frac{y_1}{2^{j-1}}}},
\end{equation}
for any vertex $y=(y_1,\dots,y_d)\in\Zo^d$ of $\Gamma$.
\end{con}

\begin{con}[Graph $\Gamma$]\label{constr:gamma}
For any $d\geq 1$ and $r\geq 0$ let $\Gamma=\Gamma(d,r)$ be the graph obtained from $\Gamma'(d)$ by adding the edges $E_{j}(v):=(v,v')$, where $v'=v+2^{j-1}\cdot(1,\dots,1)$, where $j$ runs over $1,\dots,r$ and $v$ runs over the subset of elements $u=(u_1,\dots,u_d)\in V_{\Gamma'}$ such that $(-1)^{\floor{\frac{\sum_i u_i}{2^{j-1}}}}=1$ holds. 
\end{con}

Notice that the graph $\Gamma$ is a $(d+1+r)$-valent connected graph with infinite set of vertices, and that $V_{\Gamma}=V_{\Gamma'}$ holds (see Figure \ref{fig:cross}, \ref{fig:edges}).

\begin{con}[Axial function $\alpha$ on $\Gamma$]\label{constr:gammaax}
Fix a collection of integers $t_1,\dots,t_r\in\Zo$. Let $\alpha\colon E_{\Gamma}\to \Zo^{d+1}$, $\alpha=\alpha(d,r,t_1,\dots,t_r)$ be the function taking value $\alpha(D(x,u))=e_{d+1}$ for any $x\in\Zo^d$ and any $u\in\lb \pm 1\rb^d$. By definition, for any $e\in E_{I^{d}_{1/6}(x)}$ let $\alpha(e)$ be the inner (outer, respectively) normal of the unit length for the corresponding to $e$ facet of the cube $\Io^{d}_{1/6}(x)$ in $\Ro^d$ if $x\in \Zo^d$ (if $x\in L_d \setminus\Zo^d$, respectively). For any diagonal $e=D(x,u)$, where $x\in\Zo^d$ and $u\in\lb \pm 1\rb^d$, let $\alpha(e)=e_{d+1}$, $\alpha(\overline{e})=-e_{d+1}$. In particular, one has $\alpha(e)\in \lb \pm e_1,\dots,\pm e_{d+1}\rb$ for any $e\in E_{\Gamma'}$ (see Figure \ref{fig:square}). For any $v\in V_{\Gamma}$ let
\begin{equation}\label{eq:axvals}
\alpha(E_{j}(v))=\sum_{i=1}^{d+1} \varepsilon_{i}^{j}(v) t_{i}^{j-1} w_i(v),
\end{equation}
for any $j\in\No$, where $\lb w_1(v),\dots,w_{d+1}(v)\rb$ are the values of $\alpha$ on $\str_{\Gamma'}(v)$ denoted in such a way that $w_i(v)=\pm e_i$ holds for $i=1,\dots,d+1$. 
\end{con}

Denote by $A_{x}$ the automorphism of the graph $\Gamma(d,r)$ induced by the linear operator $y\mapsto x+y$ in $\Ro^d$, where $x\in L_d$; $y\in \Ro^d$. Notice that $A_x$ is well defined for any $x\in L_d$ and that the identities
\begin{equation}\label{eq:opp}
\varepsilon_{i}^{j}(A_{2^{j-1}\cdot v}(x))=-\varepsilon_{i}^{j}(x),
\end{equation}
\begin{equation}\label{eq:transax}
\alpha(A_{1/2\cdot u}(e))=-\alpha(e),
\end{equation}
hold for any $x\in V_{\Gamma}$, $i=1,\dots,d+1$; $j\in\No$; $e\in E_{\Gamma}$; $u\in \lb \pm 1\rb^d$ and any $v\in\Zo^d$.

Our next task is to define the connections $\nabla'$, $\nabla$ on $\Gamma'$ and $\Gamma$ compatible with $\alpha'$ and $\alpha$ by describing the corresponding facets, respectively. We do this by listing all facets in the corresponding graphs in the next two definitions. One can easily check that the facets given below are compatible with $\alpha'$ and $\alpha$.

\begin{defin}[Facets of $\Gamma'$]
For any $v\in V_{\Gamma'}$ let $F_0(v):=\Cube(v)$ be the subgraph in $\Gamma'$. Denote by $\Cube_i (v)$ a unique subgraph in $\Cube(v)$ corresponding to the facet of the respective cube $\Io_{1/6}^{d}(x)$ with the normal vector $\pm e_i$ such that $v\in \Cube_i (v)$, where $i=1,\dots,d$. Let $(u,v)$ be any diagonal of $\Gamma'$. For any $i=1,\dots,n$ let $F^{d}_{i} (v)$ be the $d$-valent subgraph of $\Gamma'$ that is the union of subgraphs $A_{a e_j} \Cube_i (u)$, $A_{a e_j} \Cube_i (v)$ and $A_{a e_j} e$, where $e$ runs over $2^{d-1}$ diagonals of $\Gamma'$ incident to $\Cube_i (v)$, $j$ runs over $\lb 1,\dots,d\rb \setminus\lb i\rb$ and $a$ runs over $\Zo$.
\end{defin}

\begin{defin}[Facets of $\Gamma$]
For any $i=0,\dots,d$ let $G_{i}(v)$ be the union of the subgraphs $F_i(v)$, $F_i(A_{q\cdot 2^{j-1}\cdot(1,\dots,1)}(v))$ and edges $E_{j}(x)$ (and their inverses), where $j$ runs over $1,\dots,r$, $q$ runs over $\Zo$ and $x$ runs over the union of all vertices of these graphs. For any $j=1,\dots,r$ define the subgraph $G_{n+j}(v)$ of $\Gamma$ to be obtained by omitting all edges $E_{j}(v)$ in $\Gamma$, where $v$ runs over $V_{\Gamma}$.
\end{defin}

\begin{prop}\label{pr:epsprop}
Let $e\in E_{\Gamma'}$ be any edge such that $\alpha(e)=\pm e_i$ holds for some $i\in \lb 1,\dots,d+1\rb$. Then one has
\begin{equation}\label{eq:chsign}
\varepsilon_{j}^{q}(i(e))=\varepsilon_{j}^{q}(t(e)),\ q\in\No;\ j=1,\dots,d+1;\ j\neq i.
\end{equation}
\end{prop}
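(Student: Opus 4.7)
The plan is a direct case analysis on the type of the edge $e\in E_{\Gamma'}$. Recall from Construction \ref{constr:gammaax} that $\alpha$ takes the value $\pm e_k$ with $k\in\{1,\dots,d\}$ on a cube edge parallel to $e_k$, and the value $\pm e_{d+1}$ on a diagonal. Hence the index $i$ in the hypothesis is determined by $e$, and I only need to verify equation (\ref{eq:chsign}) in two cases.

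First, suppose $e$ is a diagonal, so $i=d+1$ and the identity has to be checked for $j=1,\dots,d$. This is immediate from Construction \ref{constr:sf}: by design, $\varepsilon_j^q$ for $j\leq d$ is extended from integer-lattice cube vertices to half-integer-lattice cube vertices precisely by imposing the constancy-along-diagonals rule we wish to verify.

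Second, suppose $e$ is a cube edge in $I^d_{1/6}(x)$, $x\in L_d$, parallel to $e_i$ with $i\leq d$; both endpoints lie in the same cube. For $j\in\{1,\dots,d\}$ with $j\neq i$: when $x\in\Zo^d$, formula (\ref{eq:epsdef}) shows $\varepsilon_j^q(v)=(-1)^{\floor{x_j/2^{q-1}}}$ depends only on the cube center, so it is the same at both endpoints. When $x=x'+\tfrac{1}{2}(1,\dots,1)$ with $x'\in\Zo^d$, I match each endpoint $v=x+\tfrac{1}{6}\sum_k s_k e_k$ to the integer-cube corner joined to $v$ by its unique diagonal; a direct computation of $D(x'',u)$ (solving the integrality constraint $x''-x'\in\Zo^d$) shows that the integer-cube center is $x''=x'+\sum_{k\colon s_k=1} e_k$. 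Since the two endpoints of $e$ differ only in the sign $s_i$, their matched integer-cube centers differ only in the $i$-th coordinate, and consequently $x''_j$ coincides for $j\neq i$. By the diagonal case already established, $\varepsilon_j^q$ is constant along each of these diagonals, so it takes the same value at the two endpoints of $e$. The case $j=d+1$ of (\ref{eq:chsign}) for a cube edge is handled in parallel using definition (\ref{eq:epsdeflast}) together with the same diagonal-propagation rule, invoking that for $i\neq 1$ the first coordinate $y_1$ is unchanged along $e$, while the case $i=1$ is supplied by the half-cube-to-integer-cube matching above.

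The main technical point is the explicit diagonal-matching computation in the half-integer cube sub-case; once this matching is carried out, the verification of (\ref{eq:chsign}) in every remaining case reduces to a coordinate-by-coordinate comparison using the explicit formulas (\ref{eq:epsdef}) and (\ref{eq:epsdeflast}).
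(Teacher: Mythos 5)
Your handling of the cases $j\leq d$ is correct and follows the same route as the paper's proof: edges of integer-centered cubes are immediate from \eqref{eq:epsdef}, diagonals are immediate from the constancy-along-diagonals rule of Construction \ref{constr:sf}, and for an edge of a half-integer cube parallel to $e_i$ you match each endpoint to the integer cube whose unique diagonal reaches it and observe that the two matched centers differ only in the $i$-th coordinate. That matching is precisely the paper's assertion $A_{e_i}\Cube(v_1)=\Cube(v_2)$, and your explicit identification $x''=x'+\sum_{k\colon s_k=1}e_k$ is a correct verification of it (the paper merely states it).

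The gap is in the sub-case $j=d+1$, $i=1$. You propose to settle it ``by the half-cube-to-integer-cube matching,'' with $\varepsilon_{d+1}^{q}$ propagated from integer cubes along diagonals as in \eqref{eq:epsdef}. But for an edge $e$ of a half-integer cube parallel to $e_1$ your own matching sends the two endpoints to integer-cube centers whose \emph{first} coordinates differ by $1$; under diagonal propagation this gives, for $q=1$, the values $(-1)^{x''_1}$ and $(-1)^{x''_1+1}$ at the two endpoints, i.e.\ your mechanism would refute \eqref{eq:chsign} in this sub-case rather than prove it. The resolution is that $\varepsilon_{d+1}^{q}$ is not defined by diagonal propagation at all: the reading of \eqref{eq:epsdeflast} that the paper's proof uses is that $\varepsilon_{d+1}^{q}$ is constant on the vertex set of every cube $I^{d}_{1/6}(x)$ with $x\in L_d$ (half-integer centers included), its value determined by the first coordinate of the center $x$. (The displayed formula cannot be read literally on a vertex $y$, since no vertex of $\Gamma$ lies in $\Zo^d$, and applying it to the literal coordinate $y_1=x_1\pm\frac16$ would already fail for $q=1$ on integer-cube edges parallel to $e_1$.) With that reading the whole case $j=d+1$ is immediate for every cube edge, with no case distinction on $i$ and no appeal to diagonals, which is how the paper disposes of it in one sentence; for a diagonal one has $i=d+1=j$, which is excluded by hypothesis. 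You should replace your final sentence on $j=d+1$ by this observation.
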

\begin{proof}
By the definition, the values of $\varepsilon_{d+1}^{q}$ on the vertices of $I_{1/6}^{d}(x)$ are equal to each other for any $x\in L_d$. This proves \eqref{eq:chsign} for $j=d+1$ (if $i\leq d$). By definition, the values of $\varepsilon_{j}^{q}$ are mutually equal on the vertices of the graph $I_{1/6}^{d}(x)$, as well as on the vertices of any diagonal emanating from $I_{1/6}^{d}(x)$, where $j=1,\dots,d$; $q\in\No$ and $x\in\Zo^d$. Suppose that $e\in E_{I^{d}_{1/6}(x)}$ holds for some $x\in L_n\setminus\Zo^d$. Notice that $i\leq d$ holds. Without loss of generality, let $e=(i(e),i(e)+1/3e_i)$ and let $j\leq d$. Let $(v_1,i(e))$, $(v_2,t(e))$ be both unique diagonals of $\Gamma'$ terminating at $i(e)$ and $t(e)$, respectively. Notice that $A_{e_i} \Cube(v_1)=\Cube(v_2)$ holds for the respective subgraphs in $\Gamma'$. Then one uses Construction \ref{constr:sf} and \eqref{eq:epsdef} to conduct the following computation
\[
\varepsilon_{j}^{q}(t(e))=
\varepsilon_{j}^{q}(v_2)=
\varepsilon_{j}^{q}(A_{e_i}(v_1))=
\varepsilon_{j}^{q}(v_1)=
\varepsilon_{j}^{q}(i(e)),\ j\neq i;\ q\in\No.
\]
The proof is complete.
\end{proof}

\begin{lem}\label{lm:checkgkm}
The function $\alpha$ satisfies the rank, opposite and congruence conditions with respect to $\Gamma$ and  $\nabla$ (see Definition \ref{def:gkm}).
\end{lem}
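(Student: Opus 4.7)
The plan is to verify the three conditions of Definition \ref{def:gkm} --- rank, opposite sign, and congruence --- one at a time, exploiting the explicit formulas of Construction \ref{constr:gammaax} together with the sign-propagation properties of $\varepsilon_i^j$ and $w_i$. For the rank condition there is essentially nothing to do: at every vertex $v\in V_\Gamma$ the subset $\str_{\Gamma'}(v)\subset\str_\Gamma(v)$ already carries axial values $w_1(v),\dots,w_{d+1}(v)$ that lie in $\lb\pm e_1,\dots,\pm e_{d+1}\rb$ and therefore form a basis of $\Zo^{d+1}$. For the opposite sign condition on edges of $\Gamma'$ the requirement is built into Construction \ref{constr:gammaax}; for an added edge $E_j(v)=(v,v')$ with $v'=v+2^{j-1}\cdot(1,\dots,1)$, I would combine identity \eqref{eq:opp} (namely $\varepsilon_i^j(v')=-\varepsilon_i^j(v)$) with the elementary observation that integer translation by $2^{j-1}\cdot(1,\dots,1)$ preserves both the cube-type of a vertex and its relative position inside its cube, so that $w_i(v')=w_i(v)$; substituting these into formula \eqref{eq:axvals} evaluated at $v'$ reproduces $-\alpha(E_j(v))$, making the opposite-sign extension consistent.

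The congruence condition is the substantive part, and I would handle it by case analysis on the types of $e,e'\in\str_\Gamma(v)$ sharing a source. If $e,e'\in E_{\Gamma'}$ this reduces to the classical torus-graph computation on $\Gamma'(d)$: the facets $F_0,\dots,F_d$ send parallel edges to parallel edges, so $c_e(e')=-2$ if $e'=e$ and $c_e(e')=0$ otherwise. If $e\in E_{\Gamma'}$ has $\alpha(e)=\pm e_s$ and $e'$ is the $E_j$-type edge at $v$, the facet structure $\lb G_0,\dots,G_d,G_{d+1},\dots,G_{d+r}\rb$ together with bijectivity of $\nabla_e$ forces $\nabla_e(e')$ to be the $E_j$-type edge at $t(e)$, whose axial value is again $\sum_i\varepsilon_i^j(t(e))t_i^{j-1}w_i(t(e))$; combining Proposition \ref{pr:epsprop} ($\varepsilon_i^q$ is invariant along $\Gamma'$-edges outside the coordinate $s$) with the matching statement for $w_i$ (a direct check from Construction \ref{constr:gammaax}: traversing an edge with $\alpha(e)=\pm e_s$ flips only $w_s$), the difference $\alpha(\nabla_e(e'))-\alpha(e')$ is supported in the direction of $w_s(v)=\pm\alpha(e)$, yielding an integer $c_e(e')$. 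Finally, when both $e=E_k(v)$ and $e'=E_j(v)$ are of $E$-type with $j\neq k$, the same facet analysis forces $\nabla_e(e')$ to be the $E_j$-type edge at $v'=v+2^{k-1}\cdot(1,\dots,1)$, giving $\alpha(\nabla_e(e'))-\alpha(e')=\sum_i[\varepsilon_i^j(v')-\varepsilon_i^j(v)]t_i^{j-1}w_i(v)$, which must be matched against $\alpha(e)=\sum_i\varepsilon_i^k(v)t_i^{k-1}w_i(v)$ coordinate by coordinate in the basis $\lb w_i(v)\rb$.

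The main obstacle is this last case: one must verify that the $d+1$ coordinate-wise equations, one per index $i=1,\dots,d+1$, admit a common integer coefficient $c_e(e')$. This reduces to a combinatorial identity relating the flip pattern of $\varepsilon_i^j$ under the translation by $2^{k-1}\cdot(1,\dots,1)$ (controlled by the floor functions $\floor{x_i/2^{j-1}}$ of Construction \ref{constr:sf}) to the values $\varepsilon_i^k(v)$ and the integers $t_i$ entering formula \eqref{eq:axvals}. Once this compatibility is checked --- which is the content encoded in the implicit choice of the $t_i$ built into Construction \ref{constr:gammaax} --- the integer $c_e(e')$ emerges uniformly in $i$ and the lemma follows.
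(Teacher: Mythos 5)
Your overall architecture --- rank for free, opposite sign via \eqref{eq:opp} together with $w_i(v')=w_i(v)$, and a case analysis for the congruence condition according to the types of $e$ and $e'$ --- is the same as the paper's, and your first two congruence cases (both edges in $E_{\Gamma'}$; $e\in E_{\Gamma'}$ transporting an $E_j$-edge, handled via Proposition \ref{pr:epsprop} and the fact that only the $s$-th summand of \eqref{eq:axvals} can change, so the difference is a multiple of $\pm e_s=\alpha(e)$) agree with what the paper does. You do omit the case $e=E_k(v)$, $e'\in E_{\Gamma'}$, but that one is immediate from $w_i(i(e))=w_i(t(e))$ for all $i$.

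The genuine gap is in your last case, $e=E_k(v)$, $e'=E_j(v)$ with $j\neq k$, which you yourself flag as the substantive point and then do not prove. Worse, the resolution you propose --- that the required compatibility is ``encoded in the implicit choice of the $t_i$'' --- is backwards: at the stage of this lemma the integers $t_1,\dots,t_r$ are \emph{arbitrary} (they are only pinned down later, in Lemma \ref{lm:findvals}, to achieve $(d+1)$-independence), so the congruence condition must hold identically in the $t_i$. Since $\varepsilon_i^j(v')-\varepsilon_i^j(v)\in\lb 0,\pm 2\rb$ while the putative right-hand side is $c\,\varepsilon_i^k(v)t_i^{k-1}$, matching coordinate by coordinate for generic $t_i$ forces $c_{E_k}(E_j)=0$ and hence forces every difference $\varepsilon_i^j(v')-\varepsilon_i^j(v)$ to vanish. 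That vanishing is exactly the fact you needed to establish: the paper proves that $\varepsilon_i^j$ is invariant under the translation $A_{2^{k-1}\cdot(1,\dots,1)}$ whenever $j\neq k$, by iterating \eqref{eq:opp} an even number ($2^{|j-k|}$) of times, so that $\alpha(\nabla_e(E_j(v)))=\alpha(E_j(v))$ on the nose. Without this invariance statement (which is a property of the floor functions in Construction \ref{constr:sf}, independent of the $t_i$), your argument does not close.
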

\begin{proof}
Notice that the rank condition is satisfied for $\alpha$ by the construction. Let $e\in E_\Gamma$. Consider the following cases.

1) Let $e\in E_{\Gamma'}$. The opposite sign condition is easily deduced for $\alpha$ along the edge $e$. In terms of Construction \ref{constr:gammaax} of $\alpha$, one has $\alpha(e)=\pm e_q$ for some $q=1,\dots,d+1$. Then one has $w_i(i(e))= w_i(t(e))$ for any $i\neq q$ and $w_q(i(e))\equiv w_q(t(e)) \equiv 0 \pmod{e_q}$. Hence, by Proposition \ref{pr:epsprop} and by \eqref{eq:axvals} one has
\begin{multline}\label{eq:congrcomp}
\alpha(E_{j}(t(e)))=
\sum_{i=1}^{d+1} \varepsilon_{i}^{j}(t(e)) t_{i}^{j-1} w_i(t(e))\equiv
\sum_{i=1}^{d+1} \varepsilon_{i}^{j}(t(e)) t_{i}^{j-1} w_i(i(e))\equiv\\
\sum_{i=1}^{d+1} \varepsilon_{i}^{j}(i(e)) t_{i}^{j-1} w_i(i(e))=
\alpha(E_{j}(i(e))) \pmod{e_{q}},
\end{multline}
where $j=1,\dots,r$. Hence, the congruence condition holds for $\alpha$ along the edge $e$.

2) Let $e\notin E_{\Gamma'}$. Let $u=i(e)$. Then $e=E_{q}(u)$ for some $q= 1,\dots,r$. By Construction \ref{constr:gammaax}, the equality $w_i(i(e))= w_i(t(e))$ holds for any $i=1,\dots,d+1$. By \eqref{eq:opp} and \eqref{eq:axvals} then one has
\begin{multline*}
\alpha(\overline{E_{q}(i(e))})=
\alpha(E_{q}(t(e)))=
\sum_{i=1}^{d+1} \varepsilon_{i}^{q}(t(e)) t_{i}^{q-1} w_i(t(e))=\\
\sum_{i=1}^{d+1} \varepsilon_{i}^{q}(t(e)) t_{i}^{q-1} w_i(i(e))=
-\sum_{i=1}^{d+1} \varepsilon_{i}^{q}(i(e)) t_{i}^{q-1} w_i(i(e))=
-\alpha(E_{q}(i(e))).
\end{multline*}
Hence, the opposite sign condition holds for $\alpha$ along the edge $e$. If $r=1$, then the congruence relations hold for $\alpha$ along $e$. Suppose that $r\geq 2$ holds. Choose any $j=1,\dots,r$ such that $j\neq q$ holds. Then by \eqref{eq:opp} one has
\begin{multline*}
\varepsilon_{i}^{q}(t(e))=
\varepsilon_{i}^{q}(A_{2^{q-1}\cdot(1,\dots,1)}(i(e)))=
\varepsilon_{i}^{q}((A_{2^{\min\lb j,q\rb-1}\cdot(1,\dots,1)})^{2^{|j-q|}}(i(e)))=\\
(-1)^{2^{|j-q|}}\varepsilon_{i}^{q}(i(e))=
\varepsilon_{i}^{q}(i(e)),
\end{multline*}
for any $q=1,\dots,d+1$. Hence, by \eqref{eq:axvals}, the computation \eqref{eq:congrcomp} holds in this case. This implies that the congruence condition holds for $\alpha$ along the edge $e$. The proof is complete.
\end{proof}

\begin{lem}\label{lm:findvals}
Let $r \geq 1$. Then there exist integers $t_1,\dots,t_r\in\Zo$ such that the axial function $\alpha(d,r,t_1,\dots,t_r)$ is $(d+1)$-independent.
\end{lem}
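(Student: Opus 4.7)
The plan is to reduce $(d+1)$-independence of $\alpha$ at each vertex to the non-vanishing of certain signed Vandermonde-type determinants in the parameters $t_i$, and then exploit the periodicity of the sign functions together with Zariski density of integer points in the complement of finitely many hypersurfaces.

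First I would fix a vertex $v\in V_\Gamma$ and choose any $d+1$ edges in $\str_\Gamma(v)$. These consist of some edges of $\Gamma'$ (with axial values $w_i(v)=\pm e_i$) indexed by some $I\subseteq \lb 1,\dots,d+1\rb$, together with some edges $E_j(v)$ indexed by some $J\subseteq \lb 1,\dots,r\rb$, with $|I|+|J|=d+1$. Assembling these as columns of a $(d+1)\times(d+1)$ integer matrix and expanding the determinant along the rows in $I$, the determinant reduces, up to an overall sign, to the $k\times k$ minor
\[
D_{v,I^c,J}(t):=\det\bigl(\varepsilon_{i'}^{j}(v)\,t_{i'}^{j-1}\bigr)_{i'\in I^c,\ j\in J},
\]
where $I^c:=\lb 1,\dots,d+1\rb\setminus I$ and $k:=|J|$. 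Linear independence of the chosen $d+1$ vectors is equivalent to $D_{v,I^c,J}(t)\neq 0$; the case $k=0$ is automatic, giving $\det=\pm 1$.

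The key step is to show that $D_{v,I^c,J}$ is a nonzero polynomial in the variables $t_i$. By the Leibniz expansion one has
\[
D_{v,I^c,J}(t)=\sum_{\sigma\colon I^c\to J}\sgn(\sigma)\prod_{i'\in I^c}\varepsilon_{i'}^{\sigma(i')}(v)\,t_{i'}^{\sigma(i')-1},
\]
where $\sigma$ runs over bijections. Distinct bijections $\sigma$ assign distinct exponent tuples to the independent variables $\lb t_{i'}\rb_{i'\in I^c}$, so the $k!$ resulting monomials are pairwise distinct, each with coefficient $\pm 1$; hence $D_{v,I^c,J}$ is not identically zero.

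Finally, since each $\varepsilon_i^j(v)$ depends on $v$ only through $\floor{x_i/2^{j-1}}\bmod 2$ with $j\leq r$, the whole sign configuration is periodic in $v$ with period dividing $2^{r+1}$ in each coordinate direction, matching the periodicity of $\Gamma(d,r)$ noted in the introduction. Therefore only finitely many distinct polynomials $D_{v,I^c,J}$ arise as $v$, $I$, $J$ vary. The simultaneous non-vanishing locus of these finitely many nonzero polynomials is a nonempty Zariski open subset of the affine parameter space over $\Qo$, which necessarily contains integer points; any such point yields the desired $(t_1,\dots)\in\Zo^r$. Alternatively, as the introduction suggests, Tarski's decision procedure for the first-order theory of $(\Ro,+,\cdot,<)$ can be applied to exhibit such integers explicitly. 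The principal technical obstacle is the monomial-distinctness argument of the previous paragraph; once that is in place, the rest is essentially formal.
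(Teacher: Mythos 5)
Your proposal is correct and follows essentially the same route as the paper: reduce $(d+1)$-independence at each vertex to the non-vanishing of the Vandermonde-type minors built from the $\varepsilon_i^j(v)t_i^{j-1}$ entries, show each such minor is a nonzero polynomial, invoke periodicity to get finitely many polynomials, and pick an integer point off their zero loci. The only (harmless) difference is in the middle step: you observe that the $k!$ Leibniz monomials are pairwise distinct so no cancellation occurs, whereas the paper isolates the lexicographically maximal monomial with respect to an ordering of the variables; both establish the same fact.
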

\begin{proof}
For any vertex $v\in V_\Gamma$ the values of the axial function $\alpha$ on $\str(v)$ are given by the columns of the following $(d+1)\times (d+1+r)$-matrix:
\begin{equation}\label{eq:matrix}
\begin{pmatrix}
(-1)^{i_1} & \cdots & 0 & (-1)^{i_1} \varepsilon_{1}^{1}(v) & \cdots & (-1)^{i_1} \varepsilon_{1}^{r}(v) t_{1}^{r-1} \\
\vdots & \ddots & \vdots & \vdots & \ddots & \vdots\\
0 & \cdots & (-1)^{i_{d+1}} & (-1)^{i_{d+1}} \varepsilon_{d+1}^{1}(v) & \cdots & (-1)^{i_{d+1}} \varepsilon_{d+1}^{r}(v) t_{d+1}^{r-1}
\end{pmatrix},
\end{equation}
where $w_{q}(v)=(-1)^{i_q} e_q$ for $q=1,\dots,d+1$ in terms of Construction \ref{constr:gammaax}, and $i_1,\dots,i_{d+1}$ depend on $v$. By slightly abusing the notation let $M=M(v;\ j_1,\dots,j_{d+1})$ be the $(d+1)\times (d+1)$-minor of the above matrix \eqref{eq:matrix} corresponding to the columns with indices $1\leq j_1<\dots<j_{d+1}\leq d+1+r$ in \eqref{eq:matrix} (from left to right). For any integers $1\leq j_1<\dots<j_{d+1}\leq d+1+r$ there exists an integer $q\in \lb 0,\dots,d+1\rb$ such that the inequalities $j_{1},\dots,j_{q}\leq d+1$ and $j_{q+1},\dots, j_{d+1}>n+1$ hold, where $j_{0}:=0$. If $q=n+1$ then
\[
\det M= \prod_{p=1}^{d+1} (-1)^{i_{p}}.
\]
Let $q\leq n$. The ordering $t_1<\dots<t_{r}$ of the variables induces the lexicographical ordering on the polynomials from the ring $\Zo[t_1,\dots,t_{r}]$. In this ordering the maximal monomial in $\det M$ is equal to
\[
\prod_{p=1}^{q} (-1)^{j_{p}}\cdot \prod_{s=q+1}^{d+1} (-1)^{i_{s}} \varepsilon_{s}^{j_{s}}(v) t_{s}^{j_{s}-1}.
\]
In particular, $\det M$ is a nonzero polynomial in $t_1,\dots,t_r$. Hence, the left-hand sides in the system of inequalities $\det M(v;\ j_1,\dots,j_{d+1})\neq 0$, where $(j_1,\dots,j_{d+1})$ exhausts all $(d+1)$-subsets of $\lb 1,\dots,d+r+1\rb$ and $v$ runs over $V_{\Gamma}$, includes no zero polynomials. The set of real solutions for this system is the complement to the finite union of subsets of zero measure in $\Ro^r$, because $\alpha$ is periodic (see \eqref{eq:transax}). Therefore, this complement has a rational point with the corresponding coordinates $t'_1,\dots,t'_r\in\Qo$. By multiplying $t'_1,\dots,t'_r$ with the respective least common multiple one obtains $t_1,\dots,t_r\in\Zo$ such that $\alpha(n,r,t_1,\dots,t_r)$ is $(n+1)$-independent. This completes the proof.
\end{proof}

\begin{rem}
The values of the axial function $\alpha(d,r,t_1,\dots,t_r)$ obtained in Lemma \ref{lm:findvals} may not be primitive, in general. However, one can replace each non-primitive value of $\alpha(n,r,t_1,\dots,t_r)$ with the corresponding primitive vector in $\Zo^{d+1}$. Notice that the axial function obtained during this procedure is $(d+1)$-independent and its values at any star of $\Gamma$ contain a basis of $\Zo^{d+1}$.
\end{rem}

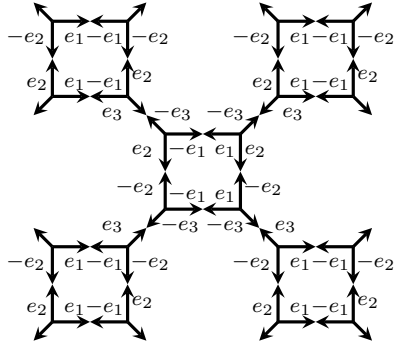
\begin{figure}
    \centering
    \begin{tikzpicture}[
	scale=1.500000,
	back/.style={loosely dotted, thick},
	edge/.style={color=black!95!black, thick},
	facet/.style={fill=gray!95!black,fill opacity=0.300000},
	facet0/.style={fill=orange!95!black,fill opacity=0.300000},
	vertex/.style={inner sep=1pt,circle,draw=black!25!black,fill=blue!75!black,thick,anchor=base},
	arr/.style={->,>=stealth, black, very thick}
	]
	\tikzstyle{every node}=[font=\tiny]
    \foreach \x in {0,2} {
        \foreach \y in {0,2} {
            %labels
            \node at (\x-1/3,\y-1/3) [label={[shift={(0.3,-0.2)}]$e_1$}]{};
            \node at (\x-1/3,\y-1/3) [label={[shift={(-0.2,-0.2)}]$e_2$}]{};
            
            \node at (\x+1/3,\y-1/3) [label={[shift={(0.2,-0.1)}]$e_2$}]{};
            \node at (\x+1/3,\y-1/3) [label={[shift={(-0.3,-0.2)}]$-e_1$}]{};

            \node at (\x-1/3,\y+1/3) [label={[shift={(-0.35,-0.6)}]$-e_2$}]{};
            \node at (\x-1/3,\y+1/3) [label={[shift={(0.3,-0.6)}]$e_1$}]{};

            \node at (\x+1/3,\y+1/3) [label={[shift={(-0.3,-0.6)}]$-e_1$}]{};
            \node at (\x+1/3,\y+1/3) [label={[shift={(0.3,-0.6)}]$-e_2$}]{};
            
            %arrows
            \draw[arr] (\x-1/3,\y-1/3) -- (\x,\y-1/3);
            \draw[arr] (\x-1/3,\y-1/3) -- (\x-1/3,\y);

            \draw[arr] (\x+1/3,\y-1/3) -- (\x,\y-1/3);
            \draw[arr] (\x+1/3,\y-1/3) -- (\x+1/3,\y);
        
            \draw[arr] (\x-1/3,\y+1/3) -- (\x-1/3,\y);
            \draw[arr] (\x-1/3,\y+1/3) -- (\x,\y+1/3);
        
            \draw[arr] (\x+1/3,\y+1/3) -- (\x,\y+1/3);
            \draw[arr] (\x+1/3,\y+1/3) -- (\x+1/3,\y);                    
        }            
    }        
    %labels
    \node at (1-1/3,1-1/3) [label={[shift={(0.3,-0.2)}]$-e_1$}]{};
    \node at (1-1/3,1-1/3) [label={[shift={(-0.4,-0.1)}]$-e_2$}]{};
    \node at (1-1/3,1-1/3) [label={[shift={(0.2,-0.6)}]$-e_3$}]{};
    
    \node at (1+1/3,1-1/3) [label={[shift={(0.3,-0.1)}]$-e_2$}]{};
    \node at (1+1/3,1-1/3) [label={[shift={(-0.2,-0.2)}]$e_1$}]{};
    \node at (1+1/3,1-1/3) [label={[shift={(-0.2,-0.6)}]$-e_3$}]{};
    
    \node at (1-1/3,1+1/3) [label={[shift={(0.1,-0.1)}]$-e_3$}]{};
    \node at (1-1/3,1+1/3) [label={[shift={(-0.3,-0.6)}]$e_2$}]{};
    \node at (1-1/3,1+1/3) [label={[shift={(0.3,-0.6)}]$-e_1$}]{};
    
    \node at (1+1/3,1+1/3) [label={[shift={(-0.2,-0.1)}]$-e_3$}]{};
    \node at (1+1/3,1+1/3) [label={[shift={(-0.2,-0.6)}]$e_1$}]{};
    \node at (1+1/3,1+1/3) [label={[shift={(0.2,-0.6)}]$e_2$}]{};

    \node at (2-1/3,2-1/3) [label={[shift={(0.2,-0.6)}]$e_3$}]{};

    \node at (0+1/3,2-1/3) [label={[shift={(-0.2,-0.6)}]$e_3$}]{};

    \node at (2-1/3,0+1/3) [label={[shift={(0.1,-0.1)}]$e_3$}]{};

    \node at (0+1/3,0+1/3) [label={[shift={(-0.2,-0.1)}]$e_3$}]{};
    
    %arrows
    \draw[arr] (1-1/3,1-1/3) -- (1,1-1/3);
    \draw[arr] (1-1/3,1-1/3) -- (1-1/3,1);
    \draw[arr] (1-1/3,1-1/3) -- (1-1/2,1-1/2);

    \draw[arr] (1+1/3,1-1/3) -- (1,1-1/3);
    \draw[arr] (1+1/3,1-1/3) -- (1+1/3,1);
    \draw[arr] (1+1/3,1-1/3) -- (1+1/2,1-1/2);

    \draw[arr] (1-1/3,1+1/3) -- (1-1/3,1);
    \draw[arr] (1-1/3,1+1/3) -- (1,1+1/3);
    \draw[arr] (1-1/3,1+1/3) -- (1-1/2,1+1/2);

    \draw[arr] (1+1/3,1+1/3) -- (1,1+1/3);
    \draw[arr] (1+1/3,1+1/3) -- (1+1/3,1);
    \draw[arr] (1+1/3,1+1/3) -- (1+1/2,1+1/2);

    %external arrows
    \foreach \x in {0,2} {
        \foreach \y in {0,2} {
             \foreach \u in {-1/3,1/3} {
                 \foreach \v in {-1/3,1/3} {
                    \draw[arr] (\x+\u,\y+\v) -- (\x+3/2*\u,\y+3/2*\v);
                 }
            }
        }
    }            
\end{tikzpicture}	
    \caption{Values of the axial function $\alpha$ on $\Gamma(2,0)$}\label{fig:square}    
\end{figure}

\begin{con}[GKM-graph $\Gamma^{a}$]\label{constr:ga}
For any $a\in\Zo$ define an equivalence relation $\sim_{a}$ on $\Ro^n$ by putting $x\sim_a y$ for any $x,y\in\Ro^n$ such that $x=y+u$ for some $u\in a\cdot \Zo^d$. For any $a=b\cdot 2^r$, $b\in \Zo$, define the graph $\Gamma^{a}=\Gamma^{a}(d,r)$ to be the quotient of $\Gamma$ by $\sim_{a}$. Define the axial function $\alpha^{a}=\alpha^{a}(d,r,t_1,\dots,t_r)$ and the connection $\nabla^{a}=\nabla^{a}(d,r)$ to be induced by $\alpha$ and $\nabla$ on the quotient graph on $\Gamma^{a}$, respectively (see \eqref{eq:transax} and Fig. \ref{fig:conng2}).
\end{con}

\begin{ex}
For any $d\geq 1$ the GKM-graph $\Gamma^{1}(d,0)$ is isomorphic to the edge graph of the standard $(d+1)$-dimensional cube $\Io^{d+1}_{1}(0)$ with the axial function induced by the embedding of $\Io^{d+1}_{1}(0)$ to $\Ro^{d+1}$. 
\end{ex}

By slightly abusing the notation let $[-]=[-]_{a}\colon \Ro^d\to \Ro^d/\sim_a$ be the quotient map.

\begin{prop}
For any $a=b\cdot 2^r$, $b\in \Zo$, the GKM-graph $\Gamma^{a}(d,r)$ is a well defined graph with finitely many vertices and edges. Furthermore, it has neither multiple edges nor loops, and has type $(d+1+r,d+1)$. The objects $[A_x]$ where $x\in L_{d}$; $[\varepsilon_{i}^{j}]$ where $j=1,\dots,r$; $[F^{n}_{i}(v)]$ are well defined for any $b\in\Zo$.
\end{prop}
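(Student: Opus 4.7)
The plan is to establish that translation by the lattice $a\cdot\Zo^d$ acts on $\Gamma(d,r)$ as a group of GKM-graph automorphisms, after which every claim in the proposition reduces to the routine content of passing to a quotient by this free discrete action. The divisibility $a=b\cdot 2^r$ is the essential ingredient: shifting any coordinate $x_i$ by $a$ changes $\floor{x_i/2^{j-1}}$ by $b\cdot 2^{r-j+1}$, which is an even integer for every $j\leq r$. Hence each function $\varepsilon_i^j$ (and in particular the indicator that selects admissible sources of the edges $E_j$) is $a\cdot\Zo^d$-invariant, so $[\varepsilon_i^j]$ descends to the quotient.

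Granted this, invariance of the edge set, of the axial function, and of the connection would follow directly from Constructions \ref{constr:gamma'}--\ref{constr:gammaax}: cube-edges and diagonals are translation-equivariant by definition, and the $E_j$-edges by the preceding invariance of their defining condition. On cube/diagonal edges $\alpha$ takes the locally combinatorial values $\pm e_i$, and on $E_j(v)$ the formula \eqref{eq:axvals} is a combination of the invariant $\varepsilon_i^j(v)$ with the vectors $w_i(v)$ read off from $\str_{\Gamma'}(v)$, also invariant. The connection is specified by the facets $F_i(v)$, $G_i(v)$, whose defining data (unions of cubical subgraphs, diagonals and $E_j$-edges translated by $A_x$) are manifestly preserved by integer translations. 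Thus $a\cdot\Zo^d$ sits inside the GKM-automorphism group of $\Gamma$, which at once gives that $[A_x]$ ($x\in L_d$) and $[F^n_i(v)]$ descend to $\Gamma^a$, since translations in $\Ro^d$ commute with one another and the facets are invariant.

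Once the group action is set up, the quotient is a well-defined GKM-graph $\Gamma^a$ with axial function $\alpha^a$ and connection $\nabla^a$. Finiteness of its vertex set is immediate because a fundamental parallelotope for $a\cdot\Zo^d$ meets $L_d$ in finitely many points, and finiteness of the edge set then follows from $(d+1+r)$-regularity, which is inherited from $\Gamma$. The type is $(d+1+r,d+1)$: the valency is preserved, and $\rk\alpha^a=d+1$ because at every star of $\Gamma^a$ the vectors $\pm e_1,\dots,\pm e_{d+1}$ are already realized among the values of $\alpha$ on cube-edges and a diagonal.

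The main obstacle will be the absence of loops and multiple edges, which I would verify via an $L^\infty$-distance estimate in $\Ro^d$. The coordinatewise displacement of any edge of $\Gamma$ is bounded by $2^{r-1}$ (attained by $E_r$-edges); all other displacements are bounded by $1/3$. A loop in $\Gamma^a$ would require a nonzero element of $a\cdot\Zo^d$ of $L^\infty$-norm at most $2^{r-1}<|a|$, which is impossible. For multiple edges I would fix a vertex $v$ and analyze differences of displacements of pairs of edges out of $v$: any pair containing a cube-edge or a diagonal has a coordinate with nonzero fractional part (a multiple of $1/6$), hence cannot lie in $a\cdot\Zo^d$; and two distinct edges of type $E_{j_1},E_{j_2}$ produce a difference of the form $(\epsilon_1 2^{j_1-1}-\epsilon_2 2^{j_2-1})(1,\dots,1)$ with $\epsilon_i\in\{\pm 1\}$, whose $L^\infty$-norm is bounded by $2^r$, strictly less than $|a|$ under the hypothesis, with the boundary cases ruled out by the parity condition on the admissible sources of $E_j$. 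This completes the verification and yields the proposition.
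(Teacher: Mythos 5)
Your proposal is correct and follows essentially the same route as the paper: pass to the quotient by the lattice $a\cdot\Zo^d$ of translation automorphisms, deduce invariance of $\varepsilon_i^j$ from the divisibility $2^r\mid a$ (the paper cites \eqref{eq:opp} for this), rule out loops and multiple edges by the same displacement estimate ($2^{r-1}$ versus $2^r$), and treat the near-boundary case of the $E_j$-edges separately. Your version is only slightly more explicit where the paper says ``it follows from the definition'' for the distinctness of the $[E_j(v)]$, but the argument is the same.
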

\begin{proof}
The quotient $\Ro^d/\sim_a$ is obtained by gluing all pairs of opposite facets of the cube $\Io^{n}_{a/2}(0)$ by respective translations in $\Ro^d$. Hence, $V_{\Gamma^a}$ is identified with $[V_{\Gamma}\cap \Io^{d}_{a/2}(0)]$. It follows from \eqref{eq:opp} that $[\varepsilon_{i}^{j}]$ is well defined for any $i=1,\dots,d+1$; $j=1,\dots,r$. The graph $\Gamma^{a}$ has neither loops nor multiple edges because the integral distance between distinct vertices of any its edge (with respect to $\Zo^d$) is less or equal to $2^{r-1}$ and the integral length of any nonzero element from $a\cdot\Zo^{d}$ is greater or equal to $2^r$. Clearly, the automorphism $[A_x]$ of $\Gamma^a$ is well-defined for any $x\in L_{d}$. We check that for any $v\in V_{\Gamma}$ the edges $[E_{i}(v)]$, $i=1,\dots,r$, are distinct. By applying $[A_{x}$] for some $x\in L_{d}$ without loss of generality assume that $v\in V_{\Cube{y}}$, where $y\in [-\frac{a}{2}(1,\dots,1),\frac{a-1}{2}\cdot (1,\dots,1)]$ belongs to the big diagonal of the respective cube. It follows from the definition that $[E_{i}(v)]$, $i=1,\dots,r$, are distinct for any $v\in V_{\Cube{y}}$ and any $y\in [-\frac{a}{2}(1,\dots,1),\frac{a-1}{2}\cdot (1,\dots,1)]$. The proof is complete.
\end{proof}

\begin{figure}
    \centering
    \input{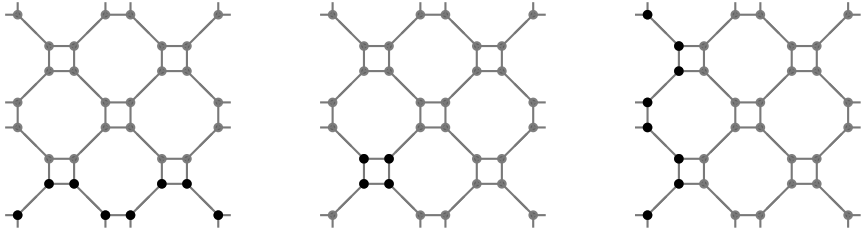}
    \caption{Facets of $\Gamma^{2}(2,0)$ meeting at the bottom-left vertex}\label{fig:conng2}    
\end{figure}

\section{Euler characteristic of face posets and proof of the main theorem}\label{sec:proof}

In this section we prove Theorem \ref{thm:example} by showing that the GKM-graph constructed in the previous section satisfies all necessary conditions. Nonextendibility is proved by Corollary \ref{cor:specface} (this would imply $(ii)$ of Theorem \ref{thm:example}). The nonrealizablity $(iii)$ is proved by using the acyclicity Theorem \ref{thm:mp} from \cite{ma-pa-06} by comparison results (Proposition \ref{pr:relposets}) and by some computations of Euler characteristic for posets given below.

Let $a\in \No$. Denote by $\pi_{J}\colon \Ro^d\to \Ro^{d}/\Ro\la e_{j}|\ j\in J\ra$ the natural projection for any $J\subseteq [n]:=\lb 1,\dots,d\rb$. The following lemma is easily proved.

\begin{lem}\label{lm:faceiso}
\begin{enumerate}[label=(\roman*)]
\item Let $\Xi$ be a $(q+1)$-face of $\Gamma^{a}(d,0)$, $q>0$. Let $\alpha\la \Xi\ra=\Zo\la e_{j},e_{d+1}|\ j\in J\ra$ for some $J\subseteq [d]$. Then $[\pi_{J}]$ induces a well-defined isomorphism $\Xi\to \Gamma^{a}(q,0)$ of $(q+1)$-valent graphs with faces (with respect to $\nabla^{a}(d,0)$). In particular, there is an isomorphism $(S_{\Gamma^{a}(d,0)})_{\leq \Xi}\cong S_{\Gamma^{a}(q,0)}$ of posets.

\item The map $\pi_{[d]\setminus J}$ induces a bijection between all $(q+1)$-faces of $\Gamma^{a}(d,0)$ with span $\Zo\la e_{j},e_{d+1}|\ j\in J\ra$ and all diagonals of $\Gamma^{a}(d-q,0)$.
\end{enumerate}
\end{lem}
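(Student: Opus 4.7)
The plan is a combinatorial verification using the explicit structure of $\Gamma^a(d,0)$ provided by Construction \ref{constr:gammaax}. I would first describe the shape of a $(q+1)$-face $\Xi$ with the prescribed span, then read off the two projections.

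Because $\Gamma^a(d,0)$ is a torus graph (complexity $0$), it is $(d+1)$-independent; comparing the rank of the span to the valence of $\Xi$ gives $|J|=q$. Every axial label on $\Gamma^a(d,0)$ is a standard basis vector $\pm e_1,\dots,\pm e_{d+1}$, so the congruence constants $c_e(e')$ vanish whenever $\alpha(e)$ and $\alpha(e')$ are distinct basis vectors. Thus the connection preserves labels, and by Proposition \ref{pr:invsubg} the face $\Xi$ through any base vertex $v$ is the connection-closure of the $q+1$ edges at $v$ whose labels lie in $\{\pm e_j : j\in J\}\cup\{\pm e_{d+1}\}$; equivalently, $\Xi$ is the connected subgraph spanned by all edges with those labels reachable from $v$. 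Concretely, $\Xi$ decomposes as a union of $q$-dimensional subcubes (one inside each visited $\Cube(x)$, obtained by freezing the coordinates indexed by $[d]\setminus J$) glued along the unique diagonal emanating from each of the $2^q$ vertices of these subcubes.

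For (i), I would verify that the coordinate projection keeping only the $J$-coordinates, passed through the $\sim_a$-quotient, is a GKM-isomorphism $\Xi\to\Gamma^a(q,0)$. Bijectivity on vertices is the crucial step: the coordinates of any face vertex indexed by $[d]\setminus J$ are determined modulo $a$ by its $J$-coordinates, which follows from the explicit diagonal formula $D(x,u)$ together with the bipartite structure $L_d=\Zo^d\sqcup(\Zo^d+\tfrac12(1,\dots,1))$ and the periodicity by $a\cdot\Zo^d$. Compatibility with axial labels is automatic after the order-preserving identifications $J\cong\{1,\dots,q\}$ for cubical edges and $e_{d+1}\mapsto e_{q+1}$ for diagonals, and compatibility with connections follows from the vanishing of the correction constants noted above. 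The poset isomorphism $(S_{\Gamma^a(d,0)})_{\leq\Xi}\cong S_{\Gamma^a(q,0)}$ is then a formal consequence of a GKM-graph isomorphism.

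For (ii), I would observe that the complementary projection, keeping only the $[d]\setminus J$-coordinates modulo $\sim_a$, collapses every $q$-subcube of $\Xi$ to a single point and sends every diagonal of $\Xi$ to the same diagonal of $\Gamma^a(d-q,0)$. This happens because the centres of adjacent cubes visited by $\Xi$ differ by $\tfrac12(\pm 1,\dots,\pm 1)$, and the pattern of $[d]\setminus J$-components of these shifts is determined solely by the starting diagonal of $\Xi$. Conversely, any diagonal $D(y,u')$ of $\Gamma^a(d-q,0)$ lifts to a diagonal of $\Gamma^a(d,0)$ with the prescribed $[d]\setminus J$-behaviour; combined with the span $\Zo\la e_j,e_{d+1}|j\in J\ra$, this lift reconstructs a unique face by part (i) in reverse, yielding the inverse map. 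The main obstacle is the vertex-bijectivity step in (i): showing that distinct face vertices never share $J$-coordinate classes modulo $a$. This is a direct but delicate check with the diagonal formula $D(x,u)$, the bipartite $L_d$-structure, and the periodicity modulo $a\cdot\Zo^d$; once settled, the rest of the proof is a routine translation of labels and connections.
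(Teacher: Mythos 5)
Your argument is correct, and since the paper dismisses this lemma with ``easily proved'' and supplies no proof at all, your direct verification is exactly the intended (omitted) argument: the vanishing of the congruence constants $c_e(e')$ for labels in distinct coordinate directions makes the connection label-preserving, so a face is the label-closure, and the key hand-waved step --- that the $[d]\setminus J$-coordinates of a vertex of $\Xi$ take only two values per coordinate (one on integer-cube vertices, one on half-integer-cube vertices) and are thus determined by the $J$-data --- does go through by the two-step computation $x_j+\tfrac16\sigma_j\mapsto x_j+\tfrac13\sigma_j\mapsto x_j+\tfrac16\sigma_j$ along consecutive diagonals. One remark: you silently (and correctly) read $\pi_J$ as the projection \emph{onto} the coordinates indexed by $J$, whereas the paper's displayed definition $\pi_J\colon\Ro^d\to\Ro^d/\Ro\la e_j\mid j\in J\ra$ literally kills those coordinates; the lemma is only dimensionally consistent under your reading, so this is a typo in the paper rather than a gap in your proof.
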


\begin{lem}\label{lm:facenum}
\begin{enumerate}[label=(\roman*)]
\item The number of distinct subgraphs $\lb [\Cube(v)]|\ v\in V_{\Gamma^{a}(d,0)}\rb$ in $\Gamma^{a}(d,0)$ is equal to $2a^d$.

\item The number of $q$-faces in $\Gamma^{a}(d,0)$ is equal to
\[
2^{d-q+1}\cdot\biggl( a^d\binom{d}{q}+a^{d-q+1}\binom{d}{q-1}\biggr).
\]
\end{enumerate}
\end{lem}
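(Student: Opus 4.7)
The plan is to count both quantities by splitting according to the combinatorics of the standard cubical pieces of $\Gamma^{a}(d,0)$ and the diagonals joining them. For part (i), I would first observe that in $\Gamma'(d)$ the map sending $v$ to the center of $\Cube(v)$ sets up a bijection between the set of cubical subgraphs $\{\Cube(v)\mid v\in V_{\Gamma'(d)}\}$ and the set $L_d=\Zo^d \sqcup (\Zo^d + \tfrac{1}{2}(1,\dots,1))$. Passing to the quotient by $a\cdot \Zo^d$, the translation action is free and preserves each of the two cosets of $\Zo^d$ inside $L_d$; each coset yields $a^d$ equivalence classes, so there are $2a^d$ distinct cubical subgraphs in $\Gamma^{a}(d,0)$.

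For part (ii), I would use that $\Gamma^{a}(d,0)$ is $(d+1)$-independent by Lemma \ref{lm:findvals} and that every edge has axial value in $\{\pm e_1,\dots,\pm e_{d+1}\}$. Hence every $q$-face $\Xi$ has span of rank exactly $q$, generated by $q$ distinct vectors among $e_1,\dots,e_{d+1}$, and I would split the enumeration into two types according to whether $e_{d+1}\in \alpha\la \Xi \ra$ or not. In the first case $\alpha\la \Xi\ra = \Zo\la e_j\mid j\in J\ra$ for some $J\subseteq [d]$ with $|J|=q$. Every edge of $\Xi$ then has axial value $\pm e_j$ with $j\in J\subseteq [d]$, hence is a cube edge, so $\Xi$ is confined to a single cubical subgraph $\Cube(v)$ and is one of its $q$-dimensional subcubes. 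Each cube has $\binom{d}{q}2^{d-q}$ faces of dimension $q$, and there are $2a^d$ cubes by (i), giving a contribution of $2^{d-q+1}a^d\binom{d}{q}$.

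In the second case $\alpha\la\Xi\ra = \Zo\la e_j, e_{d+1}\mid j\in J\ra$ with $|J|=q-1$. By Lemma \ref{lm:faceiso}(ii), for each such $J$ the $q$-faces of $\Gamma^{a}(d,0)$ with this span are in bijection with the diagonals of $\Gamma^{a}(d-q+1,0)$. A direct count shows that $\Gamma^{a}(n,0)$ has exactly $a^n 2^n$ diagonals: in $\Gamma(n,0)$ each cube $\Cube(x)$ with $x\in\Zo^n$ emits one diagonal per vertex, producing $2^n$ diagonals per cube, and the cube centers in $\Zo^n$ reduce to $a^n$ classes modulo $a\Zo^n$; distinctness of the resulting diagonals in the quotient is immediate from the bound on their integral length versus that of nonzero elements of $a\Zo^n$. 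Summing over the $\binom{d}{q-1}$ admissible choices of $J$ contributes $2^{d-q+1}a^{d-q+1}\binom{d}{q-1}$ faces, and adding the two contributions yields the claimed formula.

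The only step that relies on a nontrivial prior result is Lemma \ref{lm:faceiso}(ii), which I take for granted; the rest is a direct combinatorial enumeration. The closest thing to a subtlety is ensuring that cubical subgraphs at different centers in $\Gamma^{a}(d,0)$ share no vertex (so that cube $q$-faces from different cubes are automatically distinct), but this follows from the same integral-distance comparison that is used in the proof that $\Gamma^{a}(d,r)$ has no loops or multiple edges.
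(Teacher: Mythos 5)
Your proof is correct and follows essentially the same route as the paper: count the cubical subgraphs via $L_d/a\cdot\Zo^d$ (the paper does this by a slightly more laborious inclusion--exclusion over the boundary of the fundamental cube), then split the $q$-faces according to whether they contain a diagonal --- equivalently, whether $e_{d+1}$ lies in the span --- and handle the second type via Lemma \ref{lm:faceiso}(ii) together with the diagonal count $(2a)^{d-q+1}$, exactly as in the paper. The only nit is your citation of Lemma \ref{lm:findvals} for $(d+1)$-independence, which is stated only for $r\geq 1$; for $\Gamma^{a}(d,0)$ the independence is immediate from the construction, since the axial values at each star are $\pm e_1,\dots,\pm e_{d+1}$.
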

\begin{proof}
The sets $\lb [x]|\ x\in \Zo^{d}\cap \relint\Io_{a/2}^{I}(0)\rb$ and $\lb [x]|\ x\in L_{n}\cap \relint\Io^{d}_{a/2}(0)\rb$ have $(a-1)^{|I|}$ and $a^d +(a-1)^d$ elements for any $I\subseteq [d]$, respectively, where $\Io_{a/2}^{I}(0)$ is the $|I|$-dimensional cube in $\Ro\la e_i|\ i\in I\ra$ with the center at $0$. By comparing $[\partial \Io^{d}_{a/2}(0)]$ and
\[
0\sqcup\bigsqcup_{q=1}^{d-1} \bigsqcup_{I\subset[d]\colon |I|=q}[\relint\Io_{a/2}^{I}(0)],
\]
one computes the cardinality in $(i)$ to be equal to
\[
a^d+(a-1)^d+\sum_{i=1}^{d-1}\binom{d}{i}(a-1)^i+1=2a^d.
\]
(Notice the identifications with respect to $\sim$, see Fig. \ref{fig:conng2}.) This proves $(i)$. Recall that the number of $q$-faces of a $d$-cube is equal to $2^{d-q}\binom{q}{d}$. Notice that any $q$-face of $(\Gamma^{a}(d,0),\nabla^{a}(d,0))$ is either a $q$-face of $\Cube(v)$ for some $v\in V_{\Gamma^{a}(d,0)}$ or has a diagonal. The number of all $q$-faces in $\Gamma^a(d,0)$ with no diagonal is equal to $2a^{d}\cdot 2^{d-q}\binom{d}{q}$ by $(i)$. The set of diagonals in $\Gamma^{a}(d,0)$ is in bijection with \[
\bigsqcup_{x\in (L_d\setminus \Zo^d)\cap \relint \Io_{a/2}^{d}(0)} V_{I_{1/6}(x)}.
\]
Hence, the number of diagonals in $\Gamma^{a}(d,0)$ is equal to $(2a)^d$. Applying $(i)$ and Lemma \ref{lm:faceiso} $(ii)$ one computes the number of all $q$-faces having a diagonal in $\Gamma^{a}(d,0)$ to be equal to $2^{d-q+1} a^{d-q+1}\binom{d}{q-1}$. By summing up these two cardinalities one obtains the desired expression from $(ii)$. The proof is complete.
\end{proof}

For any $d\geq 1$ and $r\geq 0$ let $\Gamma^{d+1+r}_{d+1}:=\Gamma^{2^{r+1}}(d,r)$ be the $(d+1+r)$-valent GKM-graph of rank $d+1$. 

\begin{rem}
We use $\Gamma^{2^{r+1}}(d,r)$ instead of the GKM-graph $\Gamma^{2^{r}}(d,r)$ with a smaller number of vertices in order to include the graph with correct properties (see Lemma \ref{lm:eucomp}) for $r=0$. For $r>0$ one can take $\Gamma^{2^{r}}(d,r)$ in the proof below.
\end{rem}

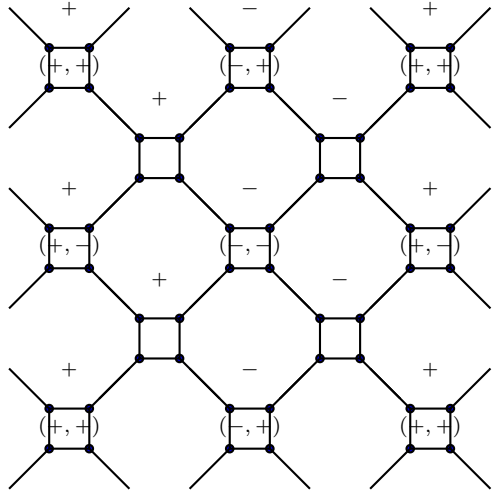
\begin{figure}
  \begin{tikzpicture}[
	scale=0.80000,
	back/.style={loosely dotted, thick},
	edge/.style={color=black!95!black, thick},
 	vertex/.style={inner sep=1pt,circle,draw=black!25!black,fill=blue!75!black,thick,anchor=base},
	]
	\tikzstyle{every node}=[font=\tiny]

    \foreach \x in {0,...,2} {
        \foreach \y in {0,...,2} {
            \foreach \u in {-1/3,1/3} {
                \foreach \v in {-1/3,1/3} {
                %vertices
                    \node[vertex] at (3*\x+\u,3*\y+\v) {};
                    %edges between squares
                        \draw[edge] (3*\x+\u,3*\y+\v) -- (3*\x+3*\u,3*\y+3*\v);
                    }
                }
                %edges of squares
            \draw[edge] (3*\x-1/3,3*\y-1/3) -- (3*\x+1/3,3*\y-1/3) -- (3*\x+1/3,3*\y+1/3) -- (3*\x-1/3,3*\y+1/3) -- (3*\x-1/3,3*\y-1/3);
        }
    }
    \foreach \x in {0,...,1} {
        \foreach \y in {0,...,1} {
            \foreach \u in {-1/3,1/3} {
                \foreach \v in {-1/3,1/3} {
                %vertices
                    \node[vertex] at (3*\x+3/2+\u,3*\y+3/2+\v) {};
                    %edges between squares
                        \draw[edge] (3*\x+3/2+\u,3*\y+3/2+\v) -- (3*\x+3/2+3*\u,3*\y+3/2+3*\v);
                    }
                }
                %edges of squares
            \draw[edge] (3*\x+3/2-1/3,3*\y+3/2-1/3) -- (3*\x+3/2+1/3,3*\y+3/2-1/3) -- (3*\x+3/2+1/3,3*\y+3/2+1/3) -- (3*\x+3/2-1/3,3*\y+3/2+1/3) -- (3*\x+3/2-1/3,3*\y+3/2-1/3);
        }
    }

    %labels
    \node at (0,0) [label={[shift={(0.0,-0.4)}]$(+,+)$}]{};
    \node at (3,0) [label={[shift={(0.0,-0.4)}]$(-,+)$}]{};
    \node at (6,0) [label={[shift={(0.0,-0.4)}]$(+,+)$}]{};
    \node at (0,3) [label={[shift={(0.0,-0.4)}]$(+,-)$}]{};
    \node at (3,3) [label={[shift={(0.0,-0.4)}]$(-,-)$}]{};
    \node at (6,3) [label={[shift={(0.0,-0.4)}]$(+,-)$}]{};
    \node at (0,6) [label={[shift={(0.0,-0.4)}]$(+,+)$}]{};
    \node at (3,6) [label={[shift={(0.0,-0.4)}]$(-,+)$}]{};
    \node at (6,6) [label={[shift={(0.0,-0.4)}]$(+,+)$}]{};
    
    \node at (0,1) [label={[shift={(0.0,-0.4)}]$+$}]{};
    \node at (0,4) [label={[shift={(0.0,-0.4)}]$+$}]{};
    \node at (0,7) [label={[shift={(0.0,-0.4)}]$+$}]{};
    \node at (3/2,3/2+1) [label={[shift={(0.0,-0.4)}]$+$}]{};
    \node at (3/2,6-3/2+1) [label={[shift={(0.0,-0.4)}]$+$}]{};

    \node at (3,1) [label={[shift={(0.0,-0.4)}]$-$}]{};
    \node at (3,4) [label={[shift={(0.0,-0.4)}]$-$}]{};
    \node at (3,7) [label={[shift={(0.0,-0.4)}]$-$}]{};
    \node at (3+3/2,3/2+1) [label={[shift={(0.0,-0.4)}]$-$}]{};
    \node at (3+3/2,6-3/2+1) [label={[shift={(0.0,-0.4)}]$-$}]{};

    \node at (6,1) [label={[shift={(0.0,-0.4)}]$+$}]{};
    \node at (6,4) [label={[shift={(0.0,-0.4)}]$+$}]{};
    \node at (6,7) [label={[shift={(0.0,-0.4)}]$+$}]{};
    
\end{tikzpicture} 
  \caption{The graph $\Gamma^{2}(2,1)$ (chords are not depicted) and some values of $\varepsilon_{i}^{q}$. For any $v\in\Zo^2$, the signs of $\varepsilon_{1}^{1},\varepsilon_{2}^{1}$ are constant at $V_{[\Cube(v)]_2}$ with fixed $v$. The signs of $\varepsilon_{3}^{1}$ on the vertices of any square are shown above the respective square}\label{fig:cross}
\end{figure}
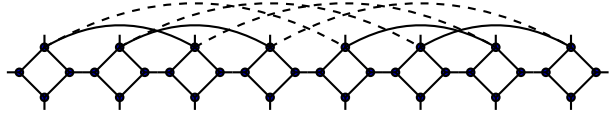
\begin{figure}
  \begin{tikzpicture}[
	scale=1.00000,
	back/.style={loosely dotted, thick},
	edge/.style={color=black!95!black, thick},
	edgeb/.style={color=black, thick},
	edgeo/.style={dashed, color=black, thick},
	facet/.style={fill=gray!95!black,fill opacity=0.300000},
	faceto/.style={fill=orange!95!black,fill opacity=0.300000},
	vertex/.style={inner sep=1pt,circle,draw=black!25!black,fill=blue!75!black,thick,anchor=base},
    ]
	\foreach \x in {0,...,7} {
        %vertices
        \node[vertex] (\x) at (\x+1/3,0) {};
        \node[vertex] (-\x) at (\x-1/3,0) {};
        \node[vertex] at (\x,1/3) {};
        \node[vertex] at (\x,-1/3) {};
        %squares
        \draw[edge] (\x+1/3,0) -- (\x,1/3) -- (\x-1/3,0) -- (\x,-1/3) -- (\x+1/3,0);
        %edges between squares
        \draw[edge] (\x+1/3,0) -- (\x+1/2,0);
        \draw[edge] (\x-1/3,0) -- (\x-1/2,0);
        %up and down pointing edges
        \draw[edge] (\x,1/3) -- (\x,1/2);
        \draw[edge] (\x,-1/3) -- (\x,-1/2);
    }
    %curved edges
    \foreach \x in {0,1} {
        \foreach \y in {0,1} {
            \draw[edgeb] (4*\x+\y,1/3) to [out=30, in=150] (4*\x+\y+2,1/3); 
        }
    }	
    \foreach \x in {0,...,3} {
        \draw[edgeo] (\x,1/3) to [out=30, in=150] (\x+4,1/3);    
    }	
\end{tikzpicture}	
  \caption{Some chords (curvilinear) of $\Gamma'$ in $\Gamma^{4}(2,2)$ in the row of squares along the direction $(1,1)\in\Ro^2$}\label{fig:edges}
\end{figure}

\begin{lem}\label{lm:eucomp}
For any $a\in \No$ one has
\begin{equation}\label{lm:eucomp1}
\tilde{\chi} \bigl(\Delta (\overline{S_{\Gamma^{a}(d,0)}^{op}})\bigr)=
(-1)^d\bigl(2a^d-(2a-1)^d\bigr).
\end{equation}
In particular, $\tilde{\chi} \bigl(\Delta (\overline{S_{\Gamma^{a}(d,0)}^{op}})\bigr)$ is nonzero and has sign $(-1)^{d+1}$ for any $a,d\geq 2$.
\end{lem}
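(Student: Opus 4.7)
The plan is to apply P.~Hall's formula (Proposition \ref{pr:euch}) to $S:=S_{\Gamma^{a}(d,0)}^{op}$ after first verifying that this poset is simplicial of dimension $d+1$, and then to substitute the explicit face counts from Lemma \ref{lm:facenum} and simplify via the binomial theorem.

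To check that $S$ is simplicial, I would apply Proposition \ref{pr:condgenpos} with $\Xi=\Gamma^{a}(d,0)$. The graph $\Gamma^{a}(d,0)$ is $(d+1)$-independent by construction (its axial function takes values in $\{\pm e_{1},\dots,\pm e_{d+1}\}$ at every star, which is a basis of $\Zo^{d+1}$); hence it is $d$-complete by Proposition \ref{pr:invsubg}, and it is in fact $(d+1)$-complete since it is connected and $(d+1)$-valent, so it serves as its own top $(d+1)$-face. Proposition \ref{pr:condgenpos} then shows that $S$ is simplicial of dimension $d+1$ and identifies $f_{i}(S)$ with the number $N_{d-i}$ of $(d-i)$-dimensional faces of $\Gamma^{a}(d,0)$ for $0\leq i\leq d$, together with $f_{-1}(S)=N_{d+1}=1$.

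Substituting into P.~Hall's formula yields
\[
\tilde{\chi}(\Delta(\overline{S}))=-1+\sum_{q=0}^{d}(-1)^{d-q}N_{q},
\]
where Lemma \ref{lm:facenum} supplies $N_{q}=2^{d-q+1}\bigl(a^{d}\binom{d}{q}+a^{d-q+1}\binom{d}{q-1}\bigr)$ for all $0\leq q\leq d$, with the convention $\binom{d}{-1}:=0$ covering $q=0$. I would split this along the two summands in $N_{q}$. After reindexing by $p=d-q$, the first piece collapses by the binomial theorem to $2a^{d}(1-2)^{d}=2(-1)^{d}a^{d}$. A further substitution $r=p+1$ turns the second piece into $-\sum_{r=1}^{d+1}(-2a)^{r}\binom{d}{r}=1-(1-2a)^{d}$, after recognizing the standard expansion of $(1-2a)^{d}$. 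Collecting terms and using $(1-2a)^{d}=(-1)^{d}(2a-1)^{d}$ gives the identity \eqref{lm:eucomp1}.

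For the final sign assertion, I note that for $a,d\geq 2$ one has $(2-1/a)^{d}\geq(3/2)^{d}\geq 9/4>2$, which rearranges to $(2a-1)^{d}>2a^{d}$; hence $\tilde{\chi}$ is nonzero with sign $(-1)^{d+1}$. No conceptual obstacle remains once the simpliciality is established; the only delicate step is the binomial bookkeeping, especially the index shift aligning the second piece with $(1-2a)^{d}$.
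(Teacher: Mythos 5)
Your proposal is correct and follows essentially the same route as the paper: establish that $S_{\Gamma^{a}(d,0)}^{op}$ is a simplicial poset via Propositions \ref{pr:invsubg} and \ref{pr:condgenpos}, apply P.~Hall's formula (Proposition \ref{pr:euch}) with the face counts of Lemma \ref{lm:facenum}, and reduce by the binomial theorem; the concluding inequality $(2a-1)^{d}>2a^{d}$ is also the one the paper uses. The only difference is that you make the completeness justification and the index substitutions more explicit than the paper does, and both check out.
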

\begin{proof}
Since the connection on $\Gamma^{a}(d,0)$ is complete one may apply Proposition \ref{pr:euch} by Proposition \ref{pr:condgenpos} to $\Delta (\overline{S_{\Gamma^{a}(d,0)}^{op}})$. The computation of the corresponding Euler characteristic is then given as follows by using Lemma \ref{lm:facenum}:
\begin{multline*}
\tilde{\chi} \bigl(\Delta (\overline{S_{\Gamma^{a}(d,0)}^{op}})\bigr)=
-1+\sum_{q=0}^{d} (-1)^{q} 2^{q+1}\biggl( a^d \binom{n}{d-q}+a^{q+1}\binom{d}{d-q-1}\biggr)=\\
-1+2a^d \sum_{q=0}^{d} (-1)^{q} \binom{d}{q}2^{q} + \sum_{q=0}^{d} (-1)^{q}\binom{d}{q+1}(2a)^{q+1}=
-1+2a^d(-1)^d+1-(1-2a)^d=\\
(-1)^d\bigl(2a^d-(2a-1)^d\bigr).
\end{multline*}
The claim about the sign follows from the obvious inequality which holds for any $a,d\geq 2$:
\[
2<\biggl(\frac{2a-1}{a}\biggr)^d.
\]
\end{proof}

\begin{ex}\label{ex:sphere}
Notice that if $a=1$ or $d=1$ then $\Delta (\overline{S_{\Gamma^{a}(d,r)}^{op}})$ is homeomorphic to a $d$-sphere. In this case Lemma \ref{lm:eucomp} implies that its reduced Euler characteristic is equal to $(-1)^d$.
\end{ex}

Now everything is at hand to prove the main theorem of this paper.

\begin{thm}\label{thm:example_}
For any $n\geq k\geq 3$ there exists an $(n,k)$-type GKM-graph $\Gamma$ satisfying the following properties:
\begin{enumerate}[label=(\roman*)]
\item The GKM-graph $\Gamma$ is $k$-independent;
\item The GKM-graph $\Gamma$ does not have nontrivial extensions;
\item The GKM-graph $\Gamma$ is not realized by a GKM-action if $n=k=3$ or $n\geq k\geq 4$.
\end{enumerate}
\end{thm}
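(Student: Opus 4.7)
The plan is to take $\Gamma:=\Gamma_{d+1}^{d+1+r}=\Gamma^{2^{r+1}}(d,r)$ from Construction~\ref{constr:ga}, with $d:=k-1$ and $r:=n-k$, and verify the three conditions in turn. Claim~(i) is immediate from Lemma~\ref{lm:findvals}: it provides integers $t_{1},\dots,t_{r}$ making $\alpha(d,r,t_{1},\dots,t_{r})$ a $(d+1)$-independent axial function on $\Gamma(d,r)$, and this property descends to the quotient by $\sim_{2^{r+1}}$ because $\alpha$ is translation-compatible via \eqref{eq:transax}, yielding $(d+1)=k$-independence of $\Gamma^{2^{r+1}}(d,r)$.

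For (ii) I would apply (the proof of) Corollary~\ref{cor:specface} to the $k$-face $\Xi:=\Gamma^{2^{r+1}}(d,0)$, which sits inside $\Gamma$ as the intersection $\bigcap_{j=1}^{r}G_{d+j}$ of the facets obtained by omitting each class of $E_{j}$-edges. Its transversal edges are precisely the $E_{j}(v)$, and both endpoints of every $E_{j}(v)$ lie in $V_{\Gamma}=V_{\Gamma'}=V_{\Xi}$, so every transversal edge is a chord. The global $(k+1)$-completeness hypothesis of Corollary~\ref{cor:specface} is needed only to furnish, for each chord $e=E_{j}(v)$, a $(k+1)$-face of $\Gamma$ containing $\Xi$ and $e$; the subgraph $\bigcap_{j'\neq j}G_{d+j'}$ of dimension $d+2=k+1$ does precisely this. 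Hence Propositions~\ref{pr:omitcond} and~\ref{pr:labelinspan} yield $2\alpha(e)\in\alpha\langle\Xi\rangle$ for every transversal edge, and the final step of the proof of Corollary~\ref{cor:specface} rules out nontrivial extensions: any extension $\widetilde{\alpha}$ would be forced to satisfy $\rk\widetilde{\alpha}=\rk\widetilde{\alpha}\langle\Xi\rangle\leq k$.

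For (iii) the approach splits into two subcases. When $n=k=3$ (so $d=2$, $r=0$) the graph $\Gamma=\Gamma^{2}(2,0)$ has complexity $0$, so any realizing $M$ would be a torus manifold with vanishing odd cohomology, whence $S_{M}\cong S_{\Gamma}$ by~\cite{ma-pa-06}. Theorem~\ref{thm:mp} then forces $\tilde{\chi}(\Delta(\overline{S_{\Gamma}^{op}}))\geq 0$, whereas Lemma~\ref{lm:eucomp} evaluates this Euler characteristic as $(-1)^{2}(2\cdot 4-9)=-1<0$, a contradiction. When $n\geq k\geq 4$ (so $d\geq 3$), suppose $M$ realizes $\Gamma$; then $M$ is in $(d+1)$-general position. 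Pick a $d$-face $\Omega:=\Gamma^{2^{r+1}}(d,0)\cap G_{i}$ with $i\in\{0,\dots,d\}$: as an intersection of $r+1$ facets of $\Gamma$ it is a genuine $d$-face of $\Gamma$, and Lemma~\ref{lm:faceiso}(i) identifies it with $\Gamma^{2^{r+1}}(d-1,0)$ and $(S_{\Gamma})_{\leq\Omega}$ with $S_{\Gamma^{2^{r+1}}(d-1,0)}$. By Proposition~\ref{pr:relposets} (applied with $j=d=k-1$) the corresponding face submanifold $M_{\Omega}$ is a $2d$-dimensional complexity-$0$ GKM-manifold in $d$-general position with $S_{M_{\Omega}}\cong(S_{\Gamma})_{\leq\Omega}$, so Theorem~\ref{thm:mp} pins down the sign of $\tilde{\chi}(\Delta(\overline{S_{M_{\Omega}}^{op}}))$ as $(-1)^{d-1}$, while Lemma~\ref{lm:eucomp} with $d'=d-1\geq 2$ and $a=2^{r+1}\geq 2$ computes the same Euler characteristic with sign $(-1)^{d}=-(-1)^{d-1}$, producing the desired contradiction.

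The main technical obstacle I expect is the bookkeeping from Section~\ref{sec:periodic} needed to realize $\Xi$ and $\Omega$ as \emph{faces of $\Gamma$ itself} via the facet structure $\{G_{i},G_{d+j}\}$, rather than merely as faces of the sub-GKM-graph $\Gamma^{2^{r+1}}(d,0)$. Once it has been checked that $\bigcap_{j}G_{d+j}=\Gamma^{2^{r+1}}(d,0)$, that $\bigcap_{j'\neq j}G_{d+j'}$ contains every chord $E_{j}(v)$, and that $\Gamma^{2^{r+1}}(d,0)\cap G_{i}$ meets the hypotheses of Lemma~\ref{lm:faceiso}(i), the rest of the argument is a formal chase through Propositions~\ref{pr:omitcond}, \ref{pr:labelinspan}, \ref{pr:relposets} and the Euler-characteristic computation of Lemma~\ref{lm:eucomp}.
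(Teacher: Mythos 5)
Your proposal follows essentially the same route as the paper: (i) via Lemma \ref{lm:findvals}, (ii) via Corollary \ref{cor:specface} applied to the chordful $k$-face $\Xi=\Gamma^{2^{r+1}}(d,0)$, and (iii) by playing the sign of $\tilde\chi\bigl(\Delta(\overline{(S_{\Gamma})_{\leq\Omega}^{op}})\bigr)$ from Lemma \ref{lm:eucomp} against the acyclicity forced by Theorem \ref{thm:mp} and Proposition \ref{pr:relposets}; your explicit supply of the $(k+1)$-faces $\bigcap_{j'\neq j}G_{d+j'}$ containing $\Xi$ and each chord is in fact a welcome strengthening, since the paper invokes the $(k+1)$-completeness hypothesis of Corollary \ref{cor:specface} without verifying it. The only slip is the range $i\in\{0,\dots,d\}$ for $\Omega=\Gamma^{2^{r+1}}(d,0)\cap G_i$: for $i=0$ that intersection is a cube face with span $\Zo\langle e_1,\dots,e_d\rangle$, to which Lemma \ref{lm:faceiso}(i) does not apply and which yields no contradiction, so one must take $i\in\{1,\dots,d\}$.
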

\begin{proof}
Lemma \ref{lm:checkgkm} implies that $\Gamma^{d+1+r}_{d+1}$ is a GKM-graph. We check that the properties $(i)$, $(ii)$ hold for such a GKM-graph. The existence of $t_1,\dots,t_r$ such that $(i)$ holds is granted by Lemma \ref{lm:findvals}. By the construction, the $(d+1)$-face $\Xi:=\Gamma^{2^{r+1}}(d,0)$ of $\Gamma^{d+1+r}_{d+1}$ has $r$ chords $E_{j}(v)$ at any vertex $v\in V_{\Xi}$, where $j$ runs over $1,\dots,r$. In particular, any transversal edge to the $(d+1)$-face $\Xi$ in $\Gamma$ is a chord for the face $\Xi$. Hence, by Corollary \ref{cor:specface} the GKM-graph $\Gamma^{d+1+r}_{d+1}$ has no nontrivial extensions. This establishes $(i)$, $(ii)$.

Now we prove the nonrealizability $(iii)$ by assuming the contrary and obtaining a contradiction with acyclicity properties given by Theorem \ref{thm:mp} from \cite{ma-pa-06}. Let $d\geq 3$. Suppose that there exists a GKM-action of $T^{d+1}$ on $M^{2(d+1+r)}$ yielding the GKM-graph $\Gamma^{d+1+r}_{d+1}$. Consider a face $\Omega$ that is isomorphic to $\Gamma^{2^{r+1}}(d-1,0)$ in $\Gamma^{d+1+r}_{d+1}$. By Proposition \ref{pr:relposets} the face $\Omega$ is realizable by a torus (face) manifold $F$ in $M$, and $(S_{M})_{\leq F}$ is isomorphic to $(S_{\Gamma^{d+1+r}_{d+1}})_{\leq \Omega}$. By Theorem \ref{thm:mp}, the poset $\overline{(S_{M})_{\leq F}^{op}}$ is $(d-2)$-acyclic. Hence, one has
\[
\tilde{\chi} \bigl(\Delta((S_{\Gamma^{d+1+r}_{d+1}})_{<\Omega})\bigr)=
\tilde{\chi}\bigl(\Delta(\overline{(S_{M})_{\leq F}^{op}})\bigr)=
(-1)^{d-1} \rk H^{d-1}(\overline{(S_{M})_{\leq F}^{op}}).
\]
By Lemma \ref{lm:faceiso} one has
\[
\tilde{\chi} \bigl(\Delta((S_{\Gamma^{d+1+r}_{d+1}})_{<\Omega})\bigr)=
\tilde{\chi} \bigl(\Delta(\overline{(S_{\Gamma^{a}(d-1,0)})^{op}})\bigr).
\]
However, by Lemma \ref{lm:eucomp} the last expression is nonzero and has sign $(-1)^{d}$. This contradiction proves $(iv)$. For $d=2$ and $r=0$ the proof is conducted in a similar way to the above by taking $\Omega=\Gamma^{3}_{3}$. The proof is complete.
\end{proof}

\begin{rem}
Clearly, $3$-independence property ($n>k\geq 3$, or equivalently $d=2$, $r> 0$) is not enough to prove nonrealizability of $\Gamma^{d+1+r}_{d+1}$ by using the argument from the proof above. A possible strategy to prove nonrealizability of $\Gamma^{d+1+r}_{d+1}$ for $d=2$ is to show that the poset $(S_{\Gamma^{d+1+r}_{d+1}})_{2}$ consisting of faces of dimension not exceeding $2$ has nontrivial first homology. Indeed, by the acyclicity theorem of \cite{ay-ma-so-22}, the poset $(S_{M})_{2}$ of faces of dimension not exceeding $2$ in $S_M$ is $1$-acyclic. On the other hand, the posets $(S_{\Gamma^{d+1+r}_{d+1}})_{2}$ and $(S_{M})_{2}$ can be shown to be isomorphic (in the $3$-independent case) by using Proposition \ref{pr:relposets}. However, we do not pursue such a tedious task here.
\end{rem}

\begin{rem}
Recall that a poset $P$ is called a \emph{homology manifold} if the order complex $\Delta(P)$ is a homology manifold in the usual sense. Notice that the poset $\overline{S_{\Gamma^{a}(d,0)}^{op}}$ is not a homology manifold for any $d\geq 3$ and any $a\geq 2$. Indeed, let $\Omega\in \overline{S_{\Gamma^{a}(d,0)}^{op}}$ be as above. Then the poset $\overline{(S_{\Gamma^{a}(d,0)})_{\leq\Omega}^{op}}$ is isomorphic to $\overline{S_{\Gamma^{a}(d-1,0)}^{op}}$ by Lemma \ref{lm:faceiso}. The order complex of the latter poset is not a homology sphere by inspecting the respective Euler characteristic from Lemma \ref{lm:eucomp}. Hence, the link of the one-element chain $G$ in $\Delta (\overline{S_{\Gamma^{a}(d,0)}^{op}})$ is not a homology sphere, as required.
\end{rem}

\begin{rem}
Define the graph $G=G(d)$ embedded into $\Ro^{d}$ as the union of edges
\[
F(x,u):=(x,\ x+\frac{1}{2}\cdot \sum_{i=1}^d (-1)^{u_i} e_i),
\]
where $x$ runs over $\Zo^d$, $u=(u_1,\dots,u_d)$ runs over $\lb \pm 1\rb^d$. Let $G^{a}(d):=G(d)/\sim_{a}$ be the $2^d$-regular graph $G^{a}(d)$ embedded to the torus $(S^1)^d$ for any $a\geq 1$. Let $p\colon \Gamma^{a}(d,0)\to G$ be the graph morphism given by replacing every cubical face with the respective vertex and mapping $D(x,u)$ to $F(x,u)$. A face in the obtained graph is by definition a subgraph in $G^{a}(d)$ obtained as an image of a face under $p$. Define the poset $P(d,a)$ as the poset of all faces in $G^{a}(d)$. One has $\dim P(d,a)=d$. Clearly, the order complexes of $\overline{P(2,a)^{op}}$ and of $\overline{S_{\Gamma^{a}(d,0)}^{op}}$ are homotopy equivalent. Conjecturally, for any $a\geq 2$ one has a homotopy equivalence
\[
\Delta(\overline{P(2,a)^{op}})\simeq (S^1)^{\vee 4a-1}\vee (S^2)^{\vee 2a^2}.
\]
The resulting Euler characteristic agrees with the formula from Lemma \ref{lm:eucomp}.
\end{rem}

\section*{Acknowledgements}
The author would like to thank Shintar\^o Kuroki for many fruitful discussions of GKM-theory, as well as A.Ayzenberg for some general remarks about the text and for some useful observations on geometry of torus actions.

\begin{bibdiv}
\begin{biblist}[\resetbiblist{99}]
\bibselect{biblio_eng}
\end{biblist}
\end{bibdiv}

\end{document}